\newtheorem*{introtheorem*}{Theorem}
\newtheorem{theorem}{Theorem}[section]
\newtheorem{lemma}[theorem]{Lemma}
\newtheorem{proposition}[theorem]{Proposition}
\newtheorem{corollary}[theorem]{Corollary}
\theoremstyle{definition}
\newtheorem{remark}[theorem]{Remark}
\newtheorem{example}[theorem]{Example}
\newtheorem*{question*}{Question}
\newtheorem*{steps*}{Answer/steps}
\newtheorem*{progress*}{Progress}
\newtheorem*{classification*}{Classification}
\newtheorem*{construction*}{Classification}
\newtheorem*{example*}{Example}
\newtheorem*{remark*}{Remark}
\newtheorem*{remarks*}{Remarks}
\newtheorem*{definition*}{Definition}
\newcommand{\mc}[1]{\mathcal #1}
\newcommand{\wh}[1]{\widehat{#1}}
\newcommand{\Q}{\mathbb{Q}}
\newcommand{\R}{\mathbb{R}}
\newcommand{\PP}{\mathbb{P}}
\newcommand{\cI}{\mathcal{I}}
\newcommand{\cZ}{\mathcal{Z}}
\newcommand{\cO}{\mathcal{O}}
\newcommand{\cC}{\mathcal{C}}
\newcommand{\X}{\mathcal{X}}
\DeclareMathOperator{\Proj}{Proj}
\DeclareSymbolFont{cyrletters}{OT2}{wncyr}{m}{n}
\DeclareMathSymbol{\Sha}{\mathalpha}{cyrletters}{"58}
\newcommand{\Spec}{\mathrm{Spec}}
\newcommand{\eua}{\mathfrak{a}}
\newcommand{\eum}{\mathfrak{m}}
\newcommand{\ha}[1]{{\hbox to#1pt{}}}
\newcommand{\hb}[1]{{\hbox to-#1pt{}}}
\newcommand{\ilim}[1]{\hbox to14pt{%
{\rm lim}\kern-15pt\lower4.5pt\hbox{$\scriptstyle\longrightarrow\,$}%
\kern-9pt\lower7.5pt\hbox{$\scriptscriptstyle{#1}\,$}}\ha3}
\newcommand{\plim}[1]{\hbox to14pt{%
{\rm lim}\kern-15pt\lower4.5pt\hbox{$\scriptstyle\longleftarrow\,$}%
\kern-9pt\lower7.5pt\hbox{$\scriptscriptstyle{#1}\,$}}\ha3}
\begin{document}

\thanks{The first and second author were supported on NSF grants DMS-1463733 and  DMS-1805439; additional support was provided by a Simons Fellowship (JH). The third author was supported by an Oberwolfach Leibniz Fellowship. The fourth author was supported by NSF collaborative FRG grant DMS-1265290.\\	
	\textit{Mathematics Subject Classification} (2010):  13F30, 14G05, 14H25 (primary); 14G27, 11E72 	(secondary).\\
	\textit{Key words and phrases.} local-global principles, valuation theory, semi-global fields, completions, henselizations, torsors, linear algebraic groups.}

\date{\today}
\title{A comparison between obstructions to local-global principles over semi-global fields}
\author{David Harbater, Julia Hartmann, Valentijn Karemaker, and Florian Pop}
\maketitle
\setcounter{tocdepth}{2}

\begin{abstract} 
We consider local-global principles for rational points on varieties, in particular torsors, over one-variable function fields over complete discretely valued fields. There are several notions of such principles, arising either from the valuation theory of the function field, or from the geometry of a regular model of the function field. Our results compare the corresponding obstructions, proving in particular that a local-global principle with respect to valuations implies a local-global principle with respect to a sufficiently fine regular model.
\end{abstract}

\section*{Introduction}
Classical local-global principles originated in the study of algebraic structures over the rational numbers, and asserted that such a structure has a specified property over $\Q$ if and only if it has that property over $\Q_p$ for all $p$ and also for $\R$.  Later such results were generalized to arbitrary global fields.  Typically a local-global principle can be rephrased as asserting that a variety $Z$ has a 
rational point over a global field $F$ if it has a rational point over each completion $F_v$ of $F$. In many cases, $Z$ is a homogeneous space under some algebraic group. When $Z$ is a principal homogeneous space (i.e., a torsor) under an algebraic group $G$, the local-global principle can also be rephrased as asserting the vanishing of the Tate-Shafarevich set $\Sha(F,G):=\operatorname{ker}(H^1(F,G)\rightarrow \prod_v H^1(F_v,G))$.  More generally, the set $\Sha(F,G)$ is the obstruction to a local-global principle holding.

In the classical case of global fields, local-global principles have been proven in a number of contexts; e.g., for quadratic forms (Hasse-Minkowski), Brauer groups (Albert-Brauer-Hasse-Noether), and torsors under connected rational linear algebraic groups (\!\cite[Corollary 9.7]{sansuc}, \cite{Cher}).
Motivated by that classical case, analogs of the above results have in recent years been proven in the case of {\em semi-global fields}; i.e., function fields of transcendence degree one over a complete discretely valued field $K$ such as $\Q_p$.  (For example, see \cite{CTOP, hhk1, hhk2, hhktorsors, CTPS1, CTPS2, PPS}.)  In this situation, the geometry is richer, and there are various possible sets of overfields of $F$ with respect to which local-global principles and the corresponding Tate-Shafarevich sets can be studied. The current manuscript aims to clarify the relationships among those obstruction sets, and among the corresponding local-global principles for rational points on smooth varieties. 

In the global field case, the completions of $F$ are taken with respect to isomorphism classes of non-trivial absolute values on $F$.  For $F$ of equal characteristic (the global function field case), this is equivalent to taking the completions of $F$ with respect to the non-trivial valuations on $F$.  All of these valuations have rank one, and they are all discrete.  Moreover, they are in bijection with the closed points on the unique regular projective model $C$ of $F$, which is a curve over the finite base field of $F$.  Here the completion $F_P$ corresponding to a point $P \in C$ is the fraction field of the complete local ring $\wh{\mc O}_{C,P}$.

By contrast, the valuations on a semi-global field $F$
are not all of rank one, and those of rank one are not all discrete.
This leads to several choices for the set of completions, and therefore for the local-global principle being considered.  More precisely, write $T$ for the valuation ring of the complete discretely valued field $K$ over which $F$ is a function field.  Let 
$\Omega_F$ be the set of all non-trivial $T$-valuations on $F$ (i.e., those whose valuation ring contains $T$); let $\Omega_F^1$ be the set of all non-trivial rank one $T$-valuations on $F$; and let $\Omega^{\mathrm{dvr}}_F$ be the set of all discrete valuations on $F$ (whose valuation ring automatically contains $T$, by \cite[Corollary~7.2]{hhktorsors}).  Thus $\Omega^{\mathrm{dvr}}_F \subseteq \Omega^1_F \subseteq \Omega_F$.  Each of these sets then gives rise to a set of completions, and therefore to a local-global principle. If one focuses on torsors over $F$ under an algebraic group $G$, then one correspondingly obtains several versions of $\Sha$, which will be denoted by 
$\Sha_{\Omega_F}(F,G)$, $\Sha_{\Omega^1_F}(F,G)$, and $\Sha_{\mathrm{dvr}}(F,G)$.

In addition, there are many choices for a regular projective model $\mc X$ of $F$ over the valuation ring $T={\mc O}_K$ of the ground field $K$, each with a closed fiber $X \subset \mc X$; for each such model and point $P \in X$, the fraction field $F_P$ of $\wh{\mc O}_{\X,P}$ will not in general be the completion of $F$ with respect to a valuation.  (In fact, it will be such if and only if $P$ is the generic point of an irreducible component of $X$.)  Taking the set of overfields $\{F_P\}_{P \in X}$ for a given model $\mc X$ leads to yet another choice of local-global principle. For a torsor over $F$ under an algebraic group $G$, the corresponding obstruction will be denoted by $\Sha_X(F,G)$.  

In this paper, we prove the following (see Theorem~\ref{A}):

\begin{introtheorem*}\label{introA}
Let $G$ be a linear algebraic group over a semi-global field $F$.  Then 
for any regular projective model $\mc X$ of $F$ with closed fiber~$X$, 
\[
\Sha_X(F,G) \subseteq \Sha_{\Omega_F}(F,G) \subseteq \Sha_{\Omega^1_F}(F,G) \subseteq \Sha_{\mathrm{dvr}}(F,G).  
\]
Moreover, taking the direct limit over all such models $\mc X$, we have
\[
\varinjlim_{\X} \Sha_X(F,G) = \Sha_{\Omega_F}(F,G) = \Sha_{\Omega_F^1}(F,G).
\]
\end{introtheorem*}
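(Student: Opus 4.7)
I would first verify the chain of inclusions. The two rightmost, $\Sha_{\Omega_F}(F,G) \subseteq \Sha_{\Omega_F^1}(F,G) \subseteq \Sha_{\mathrm{dvr}}(F,G)$, are formal consequences of $\Omega_F^{\mathrm{dvr}} \subseteq \Omega_F^1 \subseteq \Omega_F$, since $\Sha_S$ is the kernel of restriction to a product indexed by $S$ and is therefore contravariant in $S$. For $\Sha_X(F,G) \subseteq \Sha_{\Omega_F}(F,G)$, given any $v \in \Omega_F$ with ring $R_v$, the valuative criterion of properness applied to $\mathcal X \to \Spec T$ produces a unique center $P \in \mathcal X$ dominated by $R_v$, necessarily lying in the closed fiber $X$. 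The resulting local inclusion $\mathcal O_{\mathcal X,P} \hookrightarrow R_v$ extends through completions/Henselizations to an embedding $F_P \hookrightarrow F_v$, and triviality of $\alpha$ at $F_P$ thereby transfers to triviality at $F_v$.

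For the two equalities involving the direct limit, my strategy is to prove the single inclusion
\[
\Sha_{\Omega_F^1}(F,G) \subseteq \varinjlim_{\mathcal X} \Sha_X(F,G);
\]
combined with the chain, this yields $\varinjlim \Sha_X \subseteq \Sha_{\Omega_F} \subseteq \Sha_{\Omega_F^1} \subseteq \varinjlim \Sha_X$, forcing all three to coincide. So fix $\alpha \in \Sha_{\Omega_F^1}(F,G)$ and any initial regular projective model $\mathcal X_0$ of $F$. Each generic point $\xi$ of a component of $X_0$ gives a rank-one discrete $T$-valuation at which $\alpha$ is trivial by hypothesis; a Hensel/Artin-approximation spreading-out argument then shows $\alpha$ is trivial on some Zariski open $U \subseteq \mathcal X_0$ containing every such $\xi$. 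Hence the \emph{failure locus}
\[
Z_0 := \{ P \in X_0 \mid \alpha \text{ is non-trivial in } H^1(F_P, G)\}
\]
consists of only finitely many closed points of $X_0$.

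The crux of the proof is then to iteratively remove the points of $Z_0$ by blow-ups. At each $P \in Z_0$, the height-one primes of the two-dimensional complete regular local ring $\widehat{\mathcal O}_{\mathcal X_0, P}$ restrict to rank-one $T$-valuations of $F$, at which $\alpha$ is trivial by hypothesis. A patching argument at $P$ (in the spirit of \cite{hhk1,hhktorsors}), combined with a blow-up centered at $P$ and a subsequent desingularization, produces a regular projective refinement $\mathcal X_1 \to \mathcal X_0$ whose failure locus above $P$ is strictly simpler (e.g., fewer failure points, or tamer exceptional combinatorics). Iterating and invoking Noetherian induction over the exceptional configurations yields a model $\mathcal X$ with empty failure locus, so that $\alpha \in \Sha_X(F,G)$.

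The main obstacle is precisely this patching/blow-up step: showing that triviality of $\alpha$ at every branch completion of $\widehat{\mathcal O}_{\mathcal X_0, P}$ forces, after an explicit blow-up, triviality at $F_{P'}$ for every new closed point $P' \in X_1$ above $P$. Here the linear algebraic hypothesis on $G$ is essential: it allows one to represent $\alpha$ locally by matrix-cocycle data whose factorization across branches is controlled by the two-dimensional patching results of Harbater--Hartmann--Krashen. One must additionally verify that the Noetherian induction on the exceptional combinatorics terminates, which is a delicate combinatorial point requiring that each blow-up strictly decreases an appropriate measure of the failure of $\alpha$ at $P$.
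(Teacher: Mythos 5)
The overall skeleton of your argument matches the paper's: establish the inclusions formally and via centers of valuations on $\mathcal{X}$, then prove $\Sha_{\Omega^1_F}(F,G) \subseteq \varinjlim_{\mathcal{X}} \Sha_X(F,G)$ by iteratively blowing up at a finite failure locus. However, the crux of the argument---why the iterated blow-up process terminates---is precisely the hard part, and your proposed mechanism does not work. You invoke ``Noetherian induction over the exceptional combinatorics'' and a ``strictly decreasing measure,'' but you never identify such a measure, and in general there is no local invariant that strictly decreases under a single blow-up here. The paper instead argues by contradiction: an infinite chain of failure points $P_0 \leftarrow P_1 \leftarrow \cdots$ would, by a theorem of Lipman on infinitely near points (that a function cannot be indeterminate at every stage of an infinite sequence of blow-ups), produce a valuation ring $\mathcal{O}_v := \varinjlim_i \mathcal{O}_{\mathcal{X}_i,P_i}$ with $v \in \Omega_F$; one then shows that $F_v^h = \varinjlim_i \mathrm{Frac}(\mathcal{O}^h_{\mathcal{X}_i,P_i}) \subseteq \varinjlim_i F_{P_i}$ and, via Artin approximation (or Gabber--Gille--Moret-Bailly), that $Z(F_v) \neq \emptyset$ implies $Z(F_v^h) \neq \emptyset$, handling the rank-two case by reducing to a rank-one coarsening. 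This forces $Z(F_{P_i}) \neq \emptyset$ for some $i$, a contradiction. Without this valuation-theoretic input, your termination claim is unsupported.

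There are two further problems with your blow-up step. First, the claim that the height-one primes of $\widehat{\mathcal{O}}_{\mathcal{X}_0,P}$ ``restrict to rank-one $T$-valuations of $F$'' is false in general: most height-one primes of the completion intersect $\mathcal{O}_{\mathcal{X}_0,P}$ trivially (e.g., $(y - f(x)) \subset k[[x,y]]$ with $f$ a transcendental power series), so the restriction to $F$ is the trivial valuation and your hypothesis gives you nothing there. Only finitely many of them---essentially those containing the uniformizer of $K$---contract to height-one primes of $\mathcal{O}_{\mathcal{X}_0,P}$. Second, even granting triviality of $\alpha$ at the genuine branch fields, the HHK patching local-global principle produces conclusions about $H^1(F,G)$, not about $H^1(F_P,G)$; you cannot use patching at $P$ to conclude $\alpha$ is trivial over $F_P$, which is the very condition defining the failure locus. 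Related to this, you treat the linear-algebraic hypothesis on $G$ as essential for a cocycle-factorization argument, but the paper's key Theorem (Theorem~\ref{rk1inpt}) is proved for arbitrary smooth $F$-varieties $Z$; no factorization of cocycles is needed, only smoothness of $Z$ (which for torsors follows from smoothness of $G$). The linear-algebraic hypothesis enters only in reformulating the statement in terms of $\Sha$.
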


More generally, we prove the corresponding result concerning local-global principles for the existence of rational points on smooth varieties over a semi-global field; see Theorem~\ref{B}.

The significance of these results is that one is usually interested in local-global principles with respect to valuations (of a suitable type), but these can be hard to study over semi-global fields.  On the other hand, by using patching methods (as in \cite{hhk1} and \cite{hhktorsors}), one can understand the obstruction to local-global principles with respect to the points on the closed fiber of a regular projective model quite explicitly, prove that local-global principles hold under certain hypotheses, and thereby obtain results about field invariants.  The results in this manuscript relate local-global principles with respect to points (or patches) to those with respect to rank one valuations (the natural notion in the context of Berkovich spaces; in particular see \cite{meh} for a Berkovich analog of the results in \cite{hhk1}).  We should note, however, that it remains open whether local-global principles with respect to the set of {\em discrete} valuations are equivalent to the other types of local-global principles.  \medskip

\section{Completions and henselizations of semi-global fields}
Let $K$ be a complete discretely valued field with ring of integers $T={\mc O}_K$ and residue field $k$. Let~$F$ be a {\em semi-global field} over $K$, i.e., a finitely generated field extension of transcendence degree one over $K$ in which $K$ is algebraically closed. (We do not assume that $F/K$ is separable.)  The aim of this section is to introduce certain overfields of $F$, defined via completions or henselizations, and to study their relationship.  This discussion will be important when we study rational points and local-global principles in the later sections.

We will consider two types of completions in this paper, associated to prime ideals (or points) and to valuations that need not be discrete.  
In general, for any commutative ring $A$, let $\cI=\{\eua_i\}_{i\in I}$ be a set of ideals of~$A$ such that for all $i',i''\in I$ there exists $i\in I$ with $\eua_i\subset\eua_{i'}\eua_{i''}$. The {\it $\cI$-completion} of $A$ is the projective limit $\,\wh A\,$ of the system~of~projections $A/\eua_{i'}\to A/\eua_i$,  $\eua_{i'}\subset\eua_i$.  Here the completion map $\hat\imath:A\to\wh A\,$ has $\ker(\hat\imath)=\cap_i\eua_i$.  

In particular, if $\frak p$ is a prime ideal of $A$ and we take $\eua_i = \frak p^i$ for $i \ge 1$, then we get the usual $\frak p$-{\em adic completion} of $A$.  
On the other hand, suppose we are given a field $L$ and a 
valuation $w:L \to \Gamma \cup \{\infty\}$, with $\Gamma$ an ordered abelian group (the {\em value group} of $w$), having valuation ring $\cO_w$ and maximal ideal $\frak m_w$.  In this situation, consider a set $\cI\!=\{\eua_i\}_{i\in I}$ of non-zero ideals of $\cO_w$ as above with $\cap_i\eua_i=(0)$.  (For example, we may take $\eua_i = \{a \in A\,|\, w(a)\geq i\}$ for all $i>0$ in $\Gamma$.)  The $\cI$-completion $\wh\cO_w$ of $\cO_w$ is independent of the choice of such a set $\cI\!$; and $\wh\cO_w$ is a valuation ring of its fraction field $L_w$, with valuation ideal $\wh\eum_w=\eum_w\wh\cO_w$. One calls 
$\wh\cO_w$ and $L_w$ the {\em $w$-adic completions} of $\cO_w$ and $L$, respectively.  Note that $\cO_w$ is a local ring with maximal ideal $\frak m_w$; however, the $w$-adic completion of $\cO_w$ is not in general the same as the $\frak m_w$-adic completion of $\cO_w$ (in fact, they are the same if and only if $w$ is a discrete valuation; otherwise $\cap_i \frak m_w^i$ is a non-zero prime ideal).  See \cite[Chapter VI]{Bou} and \cite{englerprestel} for more about valuations and their completions.

Throughout this paper, for a semi-global field $F$ we will restrict attention to valuations that lie in $\Omega_F$, the set of non-trivial $T$-valuations on $F$ (i.e., those whose valuation ring contains $T$).  By \cite[Corollary~7.2]{hhktorsors}, all discrete valuations lie in that set.  For each valuation $v \in \Omega_F$, we let $F_v$ denote the $v$-adic completion of $F$, as in the general discussion above. This gives a first collection of overfields of~$F$.

We next define overfields that are obtained from the geometry of curves with function field~$F$.
A {\em normal model} of $F$ is a normal integral $T$-scheme $\X$ with function field $F$ that is flat and projective over $T$ of relative dimension one. If in addition the scheme $\X$ is regular we say that it is a {\em regular model}. The {\em closed fiber} of $\X$ is $X:=\left(\X\times_Tk\right)^{\mathrm{red}}$. 

Let $\X$ be a regular model of $F$ with closed fiber $X$.
The points of $X$ consist of the closed points of $\X$ and the generic points of the irreducible components of $X$.  For each point $P$ in~$X$, let $\mathcal{O}_{\X,P}$ be the local ring of $\X$ at $P$ with maximal ideal $\mathfrak{m}_P$, let $\wh{\mc O}_{\X,P}$ be its $\mathfrak{m}_P$-adic completion, and let $F_P$ be the fraction field of $\wh{\mc O}_{\X,P}$.  These give the second set of overfields of~$F$.

An essential first step in understanding the relationship between the various local-global principles 
(or their obstructions) is to study the relationship between the overfields $F_v$ and $F_P$ of $F$ introduced above. In order to do so, we need to recall a few basic facts about henselian local rings and henselization. 

Recall that a local ring $\mathcal{O}$ with maximal ideal $\mathfrak{m}$ and residue field $\kappa$
is {\em henselian} if Hensel's lemma holds in ${\mathcal O}$. That is, given a monic polynomial $f \in \mathcal{O}[x]$ with image $\bar{f} \in \kappa[x]$, any simple root $\bar a\in\kappa$ of $\bar{f}$ lifts to a simple root $a\in\cO$ of $f$.
A {\em henselization} of a local ring ${\mathcal O}$ with maximal ideal $\mathfrak{m}$ is a henselian local ring  $\mathcal{O}^h$ with the following properties: $\mathcal{O}^h$ is a direct limit of {\'e}tale $\mathcal{O}$-algebras,  $\mathfrak{m}\mathcal{O}^h$ is the maximal ideal of~$\mathcal{O}^h$, and $\mathcal{O}/\mathfrak{m} = \mathcal{O}^h/\mathfrak{m}\mathcal{O}^h$. A henselization exists for any local ring and it is unique up to isomorphism (see, for example, ~\cite[Theorems 32.28 and 32.29]{warnerfields}). 
If ${\mathcal O}$ is an integrally closed local domain with fraction field $L$, then the fraction field of $\mathcal{O}^h$ is correspondingly a direct limit of finite separable extensions of $L$; see also ~\cite[p.~48]{blr}.  In particular, 
for $\X$ and $P$ as above, we may consider the henselization $\mathcal{O}_{\X,P}^h$ of the local ring $\mathcal{O}_{\X,P}$ of $\X$ at $P$.
We may also consider the henselization $\mathcal{O}_w^h$ of a valuation ring $\mathcal{O}_w$ associated to a valuation $w$ on some field $L$. 

If $w$ is a valuation on a field $L$, then $w$ and $L$ are said to be {\em henselian} if the valuation ring~$\mathcal{O}_w$ is henselian. 
Equivalently, $w$ is henselian if it has a unique extension to each algebraic field extension of~$L$ \cite[Definition~A.13]{tigwad}.
Other equivalent formulations of the henselian property for valuation rings can be found in, e.g., \cite[Theorem A.14]{tigwad}.

In the above situation, the fraction field $L_w^h$ of $\mathcal{O}_w^h$ is the {\em henselization} of $L$ with 
respect to $w$.  That is, it is the unique (up to unique isomorphism) 
valued field extension of $L$ whose valuation is henselian and 
extends $w$, and which is universal with these properties.

The {\em rank} of a valuation is the Krull dimension of its valuation ring.  
A valuation is of rank one if and only if there is an order-preserving embedding of its value group into $\R$.
In the situation of semi-global fields $F$, it follows from Abhyankar's inequality (see~\cite{abh} or \cite[Theorem 3.4.3]{englerprestel})  
that every valuation $v \in \Omega_F$ has either rank one or rank two. 
Moreover, the rank two valuations are of the form $v = v_2 \circ v_1$, where $v_1 \in \
\Omega^1_F$, where $v_2$ is a rank one valuation of the residue field of $v_1$, and where composition is in the sense of valuations; cf.~\cite[Chapter I, \S 4]{schilling}.  (See \cite[Appendix A]{popstix}
for a classification of $T$-valuations on semi-global fields; although smoothness was assumed there, it was not essential.) 

In studying henselizations and completions of semi-global fields, we distinguish between two cases, based on the rank of the valuation.  For a rank one valuation $w$ on an arbitrary field~$L$, the completion $L_w$ of $L$ with respect to $w$ is henselian (e.g., see \cite[Proposition 1.2.2]{englerprestel}), and the henselization of $L$ with respect to $w$ is the separable closure of $L$ in its completion (this follows via \cite[Theorem 32.19]{warnerfields}).  
For valuations of rank two on a semi-global field, there is the following result.

\begin{lemma}\label{refine} 
Let $F$ be a semi-global field and let $v \in \Omega_F$ have rank two. As above, write $v = v_2 \circ v_1$ as a composition of a valuation $v_1$ of $F$ of rank one, and a valuation $v_2$ of the residue field of $v_1$. Then
\begin{enumerate}
\item \label{lem:rank2 compl}
the completions $F_v = F_{v_1}$ are equal; and
\item \label{lem:rank2 hens}
the henselizations satisfy $F^h_{v_1} \subseteq F^h_{v}$.
\end{enumerate}
\end{lemma}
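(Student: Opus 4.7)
For part (a), the plan is to show that $v$ and $v_1$ induce the same topology on $F$, so that their completions must coincide. Since $v=v_2\circ v_1$, the value group $\Gamma_v$ is isomorphic to $\Gamma_{v_1}\oplus\Gamma_{v_2}$ with the lexicographic order in which $\Gamma_{v_1}$ is the dominant factor, and the projection onto this dominant factor recovers $v_1$ from $v$. I would compare the fundamental systems of neighborhoods of zero in $F$: for $v$ these are $V_\gamma=\{x\in F:v(x)\ge\gamma\}$ with $\gamma>0$ in $\Gamma_v$, and for $v_1$ these are $U_N=\{x\in F:v_1(x)\ge N\}$ with $N>0$ in $\Gamma_{v_1}$. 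The two inclusions $V_{(N,0)}\subseteq U_N$ and $U_{N+1}\subseteq V_{(N,M)}$ are immediate from the lex ordering (for the second, $v_1(x)>N$ forces the dominant component of $v(x)$ to exceed $N$), so the filters of neighborhoods are cofinal and the induced topologies agree. Since $F_v$ and $F_{v_1}$ are both completions of $F$ as a topological field under this common topology, part (a) follows.

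For part (b), the plan is to invoke the universal property of henselization. Let $w$ be the (henselian) extension of $v$ to $F^h_v$, and let $w_1$ denote its unique rank-one coarsening, so that $w=w_2\circ w_1$ on $F^h_v$ and $w_1|_F=v_1$. The key intermediate step will be to verify that $w_1$ is itself henselian on $F^h_v$: given an algebraic extension $L/F^h_v$, any two extensions of $w_1$ to $L$ can each be refined to an extension of $w$ to $L$ (by extending their residue-field valuations $w_2$ in some way), and the uniqueness of the $w$-extension then forces the two $w_1$-extensions to coincide. Thus $(F^h_v,w_1)$ is a henselian valued extension of $(F,v_1)$. Applying the universal property of $(F^h_{v_1},v_1^h)$ among henselian valued extensions of $(F,v_1)$ then produces a unique valuation-preserving $F$-embedding $F^h_{v_1}\hookrightarrow F^h_v$, and we identify $F^h_{v_1}$ with its image.

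The main technical content is the observation that a rank-one coarsening of a henselian valuation is again henselian, which underlies part (b); this is a standard fact (see e.g.~\cite{englerprestel}) but deserves to be stated explicitly. Granted this, part (a) is just an unwinding of the definition of composite valuation, and part (b) is immediate from the standard universal property of the henselization of a valued field.
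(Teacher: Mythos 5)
Your proof is correct, and it takes a more self-contained route than the paper, which simply cites \cite[Lemma 2.2.2 and Corollary 2.2.27]{aschDH} for part~(\ref{lem:rank2 compl}) and \cite[Proposition A.31]{tigwad} for part~(\ref{lem:rank2 hens}). For (a), you show directly that $v$ and $v_1$ induce the same topology on $F$, which is the underlying reason the completions coincide (the two valuations are \emph{dependent}, one being a proper coarsening of the other). One caveat: writing $\Gamma_v\cong\Gamma_{v_1}\oplus\Gamma_{v_2}$ with lexicographic order presupposes that the exact sequence $0\to\Gamma_{v_2}\to\Gamma_v\to\Gamma_{v_1}\to 0$ splits, which need not hold in general; but your cofinality argument only uses that $\Gamma_{v_2}$ is a convex subgroup whose quotient map computes $v_1$, so it goes through without the splitting if phrased in those terms. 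For (b), rather than citing Tignol--Wadsworth for the existence of a henselian valuation on $F_v^h$ extending $v_1$, you construct it explicitly as the rank-one coarsening $w_1$ of the canonical henselian extension $w$ of $v$, invoking the standard fact that a coarsening of a henselian valuation is again henselian; this makes the argument concrete. Two small points you should spell out: first, that $w_1|_F=v_1$, which follows because $F_v^h/F$ is algebraic so $\Gamma_w/\Gamma_v$ is torsion, hence the maximal proper convex subgroup of $\Gamma_w$ meets $\Gamma_v$ in the maximal proper convex subgroup of $\Gamma_v$; second, in your argument that $w_1$ is henselian, after refining two extensions of $w_1$ on $L$ to extensions of $w$ and concluding they agree, you recover equality of the $w_1$-extensions by noting that each is the \emph{unique} rank-one coarsening of the common refinement (again using that rank is preserved in algebraic extensions), not merely \emph{some} coarsening. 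With these clarifications, your proof is a valid and arguably more illuminating alternative to the paper's citations.
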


\begin{proof}
Part~(\ref{lem:rank2 compl}) is proven in \cite[Lemma 2.2.2 and Corollary 2.2.27]{aschDH}. By \cite[Proposition A.31]{tigwad}, there is a henselian valuation $v_1'$ on 
$F_v^h$ that extends $v_1$.  Thus $F_{v_1}^h$ may be identified with the 
relative henselization of $F$ in $F_v^h$, with respect to $v_1$ and $v_1'$, proving~(\ref{lem:rank2 hens}).
\end{proof}

In the situation of the introduction, Proposition~\ref{fpfv} below will provide an essential link between the fields $F_v$ for $v \in \Omega_F$ and the fields $F_P$ for $P \in X$.  First we state two lemmas.

\begin{lemma} \label{lem:injective}
Let $\X$ be a regular model of a semi-global field $F$, and let $Q$ be a point of $\X$.  If $I$ is an ideal of $\wh{\mc O}_{\X,Q}$ such that $I \cap {\mc O}_{\X,Q} = (0)$, then $I=(0)$.
\end{lemma}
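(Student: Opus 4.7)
The plan is to prove the contrapositive: every nonzero ideal $I \subset \wh{\cO}_{\X,Q}$ meets $\cO_{\X,Q}$ in a nonzero ideal. Set $A := \cO_{\X,Q}$ and $\wh A := \wh{\cO}_{\X,Q}$; since $\X$ has relative dimension one over the one-dimensional base $T$, the Krull dimension $\dim A$ lies in $\{0,1,2\}$. The cases $\dim A = 0$ (the generic point of $\X$) and $\dim A = 1$ (the generic point of a component of the closed fiber) are immediate: in the first, $\wh A = A = F$ is a field; in the second, $A$ and $\wh A$ are DVRs with a common uniformizer, and every nonzero ideal of the completion is a positive power of that uniformizer, which lies in $A$.

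The main case is $\dim A = 2$, where $Q$ is a closed point, $A$ is a two-dimensional regular local Noetherian domain containing the uniformizer $\pi$ of $T$, and $\wh A$ is its completion (again a two-dimensional regular local Noetherian domain). I will choose $t \in \eum_A$ such that $(\pi,t)$ is an $\eum_A$-primary system of parameters for $A$; this is possible by prime avoidance applied to the finitely many minimal primes of $A$ over $(\pi)$. Using $(\pi,t)$-adic completeness of $\wh A$, the inclusion $T[t] \hookrightarrow A \hookrightarrow \wh A$ extends to a local homomorphism $B := T[[t]] \to \wh A$ of two-dimensional complete regular local rings. Since $\X \to \Spec T$ is of finite type, the residue field $\kappa := A/\eum_A$ is finite over $k$, so the closed fiber $\wh A/\eum_B \wh A = \wh A/(\pi,t)\wh A$ is a finite-dimensional $k$-vector space; a completeness-Nakayama argument shows $\wh A$ is a finite $B$-module, and miracle flatness (for a local map of two-dimensional regular local rings with zero-dimensional closed fiber) then yields that $\wh A$ is finite free over $B$.

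Given any nonzero $f \in I$, multiplication by $f$ is an injective $B$-linear endomorphism of the free module $\wh A$, and its determinant $d := N_{\wh A/B}(f) \in B$ is nonzero and lies in $(f)\wh A$ by the adjugate formula. Weierstrass preparation in $T[[t]]$ writes $d = \pi^m P(t) U$ with $P \in T[t]$ a Weierstrass polynomial and $U \in T[[t]]^\times$, so that $\pi^m P(t) = d U^{-1}$ is a nonzero element of $T[t] \subset A$ lying in $(d) \subset (f) \subset I$. This produces the required nonzero element of $I \cap A$. The main technical point is the finite freeness of $\wh A$ over $T[[t]]$, which depends essentially on the completeness of $T$ and the finite-type hypothesis on $\X/T$; without these, one can in fact exhibit nonzero ideals of $\wh A$ with trivial intersection with $A$.
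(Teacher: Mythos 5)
Your proof is correct, and it rests on the same core mechanism as the paper's: realize $\wh{\cO}_{\X,Q}$ as a finite extension of a power series ring $T[[t]]$, then use Weierstrass preparation there to push a nonzero element of $I$ into $F$. The difference is in how that finite extension is produced. The paper proceeds geometrically: it invokes a finite morphism of models $\X \to \PP^1_T$ sending $Q$ to the origin (\cite[Proposition~6.6]{HH}), proves the lemma for $\PP^1_T$ by applying Weierstrass preparation directly to $g \in I \subseteq T[[t]]$ (writing $g=fu$ with $f \in F$ and $u$ a unit, so that $f \in I \cap \cO_{\X,Q}$), and then transfers the general case across the induced finite injection $\wh{\cO}_{\PP^1_T,P} \hookrightarrow \wh{\cO}_{\X,Q}$ by contracting $I$ and using that a nonzero ideal in an integral extension of domains has nonzero contraction. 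Your route is purely local-algebraic: you choose a system of parameters $(\pi,t)$, establish that $T[[t]] \to \wh{\cO}_{\X,Q}$ is finite free via a completeness and miracle-flatness argument, and then push a nonzero element of $I$ down to $T[[t]]$ by taking a norm before applying Weierstrass preparation. The paper's argument is shorter once the finite-morphism input from [HH] is granted, while yours is more self-contained, replacing the global finite cover by elementary commutative algebra; it also makes explicit where the completeness of $T$ and the finite-type hypothesis on $\X/T$ enter. One small point worth spelling out is that $T[t]\to\cO_{\X,Q}$ is injective (since $t\in\mathfrak m_{\X,Q}$ cannot be algebraic over $K$, as $K$ is algebraically closed in $F$), so that $\pi^m P(t)$ is genuinely a nonzero element of $A$; alternatively, this follows once freeness of $\wh{\cO}_{\X,Q}$ over $T[[t]]$ is established.
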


\begin{proof}
The assertion is trivial if $Q$ is the generic point.  It is also clear if $Q$ is a point of codimension one, since then ${\mc O}_{\X,Q}$ is a discrete valuation ring, and contraction induces a bijection between the ideals of the completion $\wh{\mc O}_{\X,Q}$ and the ideals of ${\mc O}_{\X,Q}$.  So we now assume that $Q$ has codimension two; i.e.,\ $Q$ is a closed point of $\X$, lying on the closed fiber $X$ of $\X$.  

First consider the case that $\X = \PP^1_T$ and that $Q$ is the origin on $X = \PP^1_k$.  Let $g$ be an element of the ideal $I \subset \wh{\mc O}_{\X,Q}$.  By the Weierstrass Preparation Theorem (e.g., see~\cite[Proposition~VII.3.9.6]{Bou}), there exist an element $f \in F$ and a unit $u \in \wh{\mc O}_{\X,Q}^\times$ such that $g=fu$.  Thus $f = gu^{-1}$ lies in 
$\wh{\mc O}_{\X,Q} \cap F = {\mc O}_{\X,Q}$ and in $I$.  So $f \in I \cap {\mc O}_{\X,Q}$ is equal to $0$ by hypothesis.  Hence $g=fu=0$, concluding the proof of this case.  

For a general $\X$, there exists a finite morphism $\X \to \X' := \PP^1_T$ that takes $Q$ to the origin $P \in \PP^1_k \subset \X'$ (see~\cite[Proposition~6.6]{HH}).    
The inclusion $\mc O_{\X',P} \subseteq {\mc O}_{\X,Q}$ induces a morphism $\wh{\mc O}_{\X',P} \to \wh{\mc O}_{\X,Q}$, which is finite since $\X \to \X'$ is finite, and is injective because the $\frak m_Q$-adic topology on ${\mc O}_{\X,Q}$ restricts to the $\frak m_P$-adic topology on ${\mc O}_{\X',P}$.
The contraction of~$I$  to $\mc O_{\X',P} \subset {\mc O}_{\X,Q}$ is trivial, hence so is the contraction to $\wh{\mc O}_{\X',P}$ by the case of $\X = \PP^1_T$ above.  Since $\wh{\mc O}_{\X,Q}$ is finite over $\wh{\mc O}_{\X',P}$, the ideal~$I$ itself is trivial.
\end{proof}

We note that as an alternative argument for a general $\X$, we could use \cite[Proposition~3.4]{hhkWeier} to reduce to the case where the Weierstrass Preparation Theorem \cite[Theorem~3.1(c)]{hhkWeier} applies, and then use that result to prove Lemma~\ref{lem:injective}.

\smallskip

Recall that the \emph{center} of a valuation $w$ on the function field of a separated integral scheme $\cZ$ is a point $Q$ on $\cZ$ such that $\mathcal{O}_w$ contains the local ring $\mathcal{O}_{\cZ,Q}$, and such that $\mathfrak{m}_{\cZ,Q} = \mathfrak{m}_v \cap \mathcal{O}_{\cZ,Q}$.
If a center exists on $\cZ$ then it is unique, and if $w$ is non-trivial then the center is not the generic point of $\cZ$.

The next lemma is well known to experts, but there does not seem to be a good reference for it in the literature.  For the sake of completeness, we include a short proof.  (This lemma remains true more generally for proper integral schemes by the valuative criterion for properness, but we do not need that stronger form here.)  

\begin{lemma}\label{lem:center}
Let $\cZ$ be a projective integral scheme over a ring $A$, and let $w$ be an $A$-valuation on the function field of $\cZ$ (i.e., the valuation ring of $w$ contains $A$).  Then $w$ has a center on $\cZ$.
\end{lemma}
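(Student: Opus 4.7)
The plan is to reduce to the affine setting using a projective embedding, find an affine chart of $\cZ$ whose coordinate ring is contained in $\mathcal{O}_w$, and then take the contraction of $\mathfrak{m}_w$ to that ring to produce the center.

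First I would fix a closed immersion $\cZ \hookrightarrow \PP^n_A$ with homogeneous coordinates $x_0,\dots,x_n$, and let $U_j = D_+(x_j) = \Spec A[x_0/x_j,\dots,x_n/x_j]$ be the standard affine opens, with $V_j := \cZ \cap U_j$ affine with coordinate ring $B_j$. Writing $K$ for the function field of $\cZ$, each nonempty $V_j$ is integral with $\mathrm{Frac}(B_j) = K$, so $B_j$ embeds canonically in $K$. Let $J = \{\,j : V_j \ne \emptyset\,\}$, which is nonempty because $\cZ$ is. I would note that by irreducibility, for $j,k \in J$ the intersection $V_j \cap V_k$ is nonempty: otherwise $\cZ \subseteq \{x_j = 0\} \cup \{x_k = 0\}$, forcing $\cZ$ into one hyperplane and contradicting $J$-membership. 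Thus for any fixed $k_0 \in J$, the elements $z_i := x_i/x_{k_0} \in K$ are nonzero for $i \in J$, while $z_i = 0$ for $i \notin J$ since then $x_i$ restricts to $0$ on $\cZ$.

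The key move is to choose $j_0 \in J$ minimizing $w(z_{j_0})$ over the finite set $J$. The generators $y_{ij_0} = x_i/x_{j_0}$ of $B_{j_0}$ then satisfy $y_{ij_0} = z_i/z_{j_0}$, so $w(y_{ij_0}) = w(z_i) - w(z_{j_0}) \geq 0$ for $i \in J$, and $y_{ij_0} = 0 \in \mathcal{O}_w$ for $i \notin J$. Combined with the hypothesis $A \subseteq \mathcal{O}_w$, this gives $B_{j_0} \subseteq \mathcal{O}_w$.

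To finish, I would set $\mathfrak{p} := \mathfrak{m}_w \cap B_{j_0}$, a prime of $B_{j_0}$. Any $s \in B_{j_0} \setminus \mathfrak{p}$ is a unit of $\mathcal{O}_w$, so the localization $(B_{j_0})_\mathfrak{p}$ sits inside $\mathcal{O}_w$. Taking $Q \in V_{j_0} \subseteq \cZ$ to be the point corresponding to $\mathfrak{p}$, we have $\mathcal{O}_{\cZ,Q} = (B_{j_0})_\mathfrak{p} \subseteq \mathcal{O}_w$ and $\mathfrak{m}_{\cZ,Q} = \mathfrak{p}(B_{j_0})_\mathfrak{p} = \mathfrak{m}_w \cap \mathcal{O}_{\cZ,Q}$, so $Q$ is the desired center.

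The only real obstacle is bookkeeping around coordinates that vanish identically on $\cZ$: without being careful, the ratios $x_i/x_j$ may fail to be well-defined rational functions, and the minimization argument could select a $j_0$ with $V_{j_0}$ empty. Restricting to the index set $J$ and invoking irreducibility to ensure pairwise compatibility of the charts resolves this, and makes the finiteness of $J$ enough to pick the minimizer.
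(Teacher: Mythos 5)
Your proof is correct and follows essentially the same strategy as the paper's: select an affine chart of the projective scheme whose coordinate ring lands in $\mathcal{O}_w$ by minimizing the valuation of ratios of coordinates, then contract $\mathfrak{m}_w$ to obtain the center. Your bookkeeping around identically-vanishing coordinates and the inclusion of a base index $k_0$ in the minimization (ensuring the minimum valuation is $\le 0$, so that $x_{k_0}/x_{j_0}$ also lands in $\mathcal{O}_w$) is in fact a bit more careful than the paper's terse version, which minimizes over $j>0$ and leaves these edge cases implicit.
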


\begin{proof}
Write $\cZ=\Proj S$, where $S$ is a graded $A$-algebra generated by finitely many elements 
$x_0,\dots,x_n$ of degree one.  Choose $i$ such that $w(x_i/x_0)=\min_{j>0} w(x_j/x_0)$.  Thus $w(x_j/x_i)\ge 0$ for all $j$. Hence $R:=A[x_j/x_i]_{0\le j\le n}\subset \cO_w$, 
with $\Spec(R)=D_{x_i}^+$, the open subset of~$\cZ$ where $x_i \ne 0$.  The prime ideal $\eum_w \cap R$ of $R$ defines a point $Q \in D_{x_i}^+ \subset \cZ$ such that $\cO_{\cZ,Q}\subset\cO_w$ and $\eum_{\cZ,Q}=\eum_w\cap\cO_Q$.  That is, $Q$ is the center of $w$ on $\cZ$.
\end{proof}

\begin{proposition}\label{fpfv}
Let $F$ be a semi-global field over a complete discretely valued field $K$, and let $\X$ be a regular model of $F$  with closed fiber~$X$. 
For every valuation $v \in \Omega_F$ there exists a point $P \in X$ (not necessarily closed) 
such that $F_P \subseteq F_v$.
\end{proposition}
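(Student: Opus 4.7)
The plan is to first reduce to the case that $v$ has rank one by Lemma~\ref{refine}: if $v = v_2\circ v_1$ has rank two, then $F_v = F_{v_1}$, so a point $P\in X$ working for $v_1$ also works for $v$. Assuming $v$ has rank one, I apply Lemma~\ref{lem:center} to obtain a center $Q\in\X$ of $v$, with $\cO_{\X,Q}\subseteq\cO_v$ and $\eum_Q = \eum_v\cap\cO_{\X,Q}$. I then split into cases based on the codimension of $Q$ in $\X$.

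If $Q$ is a closed point of $\X$ (necessarily on $X$, since $\X$ is proper over $T$), I take $P = Q$. The maximal ideal $\eum_P$ is finitely generated by $a_1,\dots,a_s\in\eum_v$, so $\gamma := \min_i v(a_i) > 0$ and $v(\eum_P^n)\ge n\gamma$. The inclusion $\cO_{\X,P}\hookrightarrow\cO_v$ is thus continuous for the $\eum_P$-adic and $v$-adic topologies, and extends uniquely to a ring map $\phi\colon\wh\cO_{\X,P}\to\wh\cO_v$ by completeness of $\wh\cO_v$. Its kernel $I$ meets $\cO_{\X,P}$ trivially (as $\cO_{\X,P}\hookrightarrow F\hookrightarrow F_v\supseteq\wh\cO_v$ is injective), so Lemma~\ref{lem:injective} forces $I = 0$; passing to fraction fields gives $F_P\hookrightarrow F_v$.

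If $Q$ has codimension one in $\X$, then $\cO_{\X,Q}$ is a DVR. As a rank-one DVR has no non-trivial coarsening among valuation rings of $F$, the inclusion $\cO_{\X,Q}\subseteq\cO_v$ is an equality, so $F_v = F_Q$. When $Q\in X$ (a generic point of a component of $X$), one takes $P = Q$. When $Q\notin X$, $Q$ is a closed point of the generic fibre $\X_K$; by properness of $\X/T$, $\overline{\{Q\}}\cap X\ne\emptyset$, and I choose a closed point $P$ in this intersection. To embed $F_P$ into $F_Q$, I use that the height-one prime $\mathfrak p_Q := \eum_v\cap\cO_{\X,P}$ is principal (since $\cO_{\X,P}$ is a regular local ring, hence a UFD), say $\mathfrak p_Q = (\pi)$, with $v(\pi)>0$. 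Via the finite morphism $\X\to\PP^1_T$ sending $P$ to the origin (as in the proof of Lemma~\ref{lem:injective}) together with Weierstrass preparation, $\wh\cO_{\X,P}$ may be realized as finite over $T[[\pi]]$. Convergence of power series in $\pi$ in the complete field $F_v$ (using $v(\pi)>0$) gives an embedding $T[[\pi]]\hookrightarrow F_v$, which extends to $\wh\cO_{\X,P}\hookrightarrow F_v$ by finiteness (with injectivity once more controlled by Lemma~\ref{lem:injective}).

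The main technical obstacle is the final subcase $Q\notin X$: the $\eum_P$-adic topology on $\cO_{\X,P}$ is not continuous into the $v$-adic topology of $F_v$ (elements of $\eum_P$ generally have $v$-valuation zero), so the embedding cannot be obtained by a direct continuity argument. Instead, it must be built by exploiting the algebraic structure of $\wh\cO_{\X,P}$ as a finite extension of the power series ring $T[[\pi]]$ generated by a uniformizer of $\cO_v$.
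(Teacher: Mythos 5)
Your overall strategy matches the paper's: reduce rank-two valuations to rank one via Lemma~\ref{refine}(\ref{lem:rank2 compl}), produce a center $Q\in\X$ by Lemma~\ref{lem:center}, push the local ring into $\cO_v$ and get injectivity of the completed map from Lemma~\ref{lem:injective}, and handle separately the case where $Q$ is a codimension-one point of the generic fiber by passing to a closed point $P$ in $\overline{\{Q\}}\cap X$. The paper runs the continuity argument uniformly for all $Q$ (getting $F_Q\hookrightarrow F_v$ first, then showing $\wh\cO_{\X,P}\subset\wh\cO_{\X,Q}$ when $Q\notin X$), while you case first on $\operatorname{codim}Q$ and, for codimension one, observe that the domination $\cO_{\X,Q}\subset\cO_v$ between valuation rings forces equality. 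That is a valid and slightly more explicit organization, but it is the same decomposition and uses the same lemmas; the hard point is the same.

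The place that needs more care is the subcase $Q\notin X$. You write that the convergence embedding $T[[\pi]]\hookrightarrow F_v$ ``extends to $\wh\cO_{\X,P}\hookrightarrow F_v$ by finiteness.'' That is not automatic: a ring homomorphism does not in general extend along a finite ring extension (compare $\Z\hookrightarrow\Q$ and $\Z\subset\Z[i]$). The actual content is that $\wh\cO_{\X,P}$ is generated as a $T[[\pi]]$-module by elements of $\cO_{\X,P}$ (Nakayama plus density of $\cO_{\X,P}$ in $\wh\cO_{\X,P}$), and that the two given maps --- the natural inclusion $\cO_{\X,P}\subset F\subset F_v$ and the power-series map $T[[\pi]]\to\wh\cO_v$ --- glue on the compositum, i.e.\ respect the $T[[\pi]]$-linear relations in $\wh\cO_{\X,P}$. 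This is genuinely the crux, and your own closing remark identifies it as ``the main technical obstacle'' without discharging it. (The paper's proof is also terse here --- it asserts the inclusion $\wh\cO_{\X,P}\subset\wh\cO_{\X,Q}$ is ``clear'' for $\PP^1_T$ and reduces to that case --- but it at least reduces to a concrete target $\wh\cO_{\X,Q}$ rather than invoking a non-existent general extension principle.) Two smaller remarks: (i) the finiteness of $T[[\pi]]\to\wh\cO_{\X,P}$ for your specific $\pi$ is most cleanly seen from $\wh\cO_{\X,P}/(\pi)\cong\wh\cO_{\overline{\{Q\}},P}$ being finite over $T$ (using properness of $\overline{\{Q\}}\to\Spec T$), not directly from the finite morphism to $\PP^1_T$, since there is no reason $\pi$ pulls back from a Weierstrass polynomial on $\PP^1_T$; and (ii) $v(\pi)>0$ does hold here, because $\pi$ is a uniformizer of $\cO_{\X,Q}=\cO_v$.
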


\begin{proof}
First suppose that $v$ is of rank one.  
By Lemma~\ref{lem:center}, $v$ has a center $Q$ on $\X$.  
Then $c := \min \{v(f) \mid f\in \mathfrak{m}_{\X,Q} \} >0$ since $\mathfrak{m}_{\X,Q}$ is finitely generated; and 
\[\mathfrak{m}_{\X,Q}^i \subseteq \eua_i := \{a \in \mathcal{O}_v \,|\, v(a)\geq ic\}.\] 
Since $v(F) \subseteq \mathbb{R}$, we have $\cap_i \eua_i~=~(0)$.
Hence $\mathcal{O}_{\X,Q} \hookrightarrow \mathcal{O}_v$ is a continuous map of topological rings, and there is an induced map $\widehat{\mathcal{O}}_{\X,Q} \to \widehat{\mathcal{O}}_v$ between the respective completions.
This map is injective, as can be seen by applying Lemma~\ref{lem:injective} to the kernel.  Therefore it induces an inclusion of fraction fields $F_Q~\hookrightarrow~F_v$.  

If $Q$ lies on $X$, then $P=Q$ satisfies the required condition.  On the other hand, if $Q$ does not lie on $X$, then $Q$ is a codimension one point of the generic fiber $\X_K$ of $\X$, and 
there is a discrete valuation $v$ on $F$ corresponding to $Q$.  The closure of $Q$ in $\X$ meets $X$ at a closed point $P$, and the containment ${\mc O}_{\X,P} \subset {\mc O}_{\X,Q}$ induces an inclusion $\wh {\mc O}_{\X,P} \subset \wh{\mc O}_{\X,Q}$.  Namely, this is clear if $\X$ is the projective line over the valuation ring of $K$ and $P$ is the origin on the closed fiber, and one can reduce to that case as in the proof of Lemma~\ref{lem:injective} above.  Thus $F_P \subset F_Q = F_v$.

Next, suppose that $v$ is of rank two, so that we may write $v = v_2 \circ v_1$ as in the discussion before Lemma~\ref{refine}, for some rank one valuation $v_1 \in \Omega^1_F$.  By the previous case, $F_{v_1}$ contains $F_P$ for some point $P \in X$.  But $F_v=F_{v_1}$
by Lemma~\ref{refine}(\ref{lem:rank2 compl}), and so $F_v$ contains $F_P$.
\end{proof}

\section{Local points on $F$-varieties}\label{sec:local}

In this section, we study varieties over a semi-global field $F$ that have rational points over field extensions $F_v$ (for $v \in \Omega_F$) or $F_P$ (for $P$ in the closed fiber $X$ of a regular model $\X$).  We begin by stating some consequences of the results in the previous section.

\begin{proposition}\label{rk1=all}
Let $Z$ be an $F$-variety.
Then $Z(F_v) \neq \emptyset$ for \emph{all} valuations $v \in \Omega_F$, if and only if $Z(F_v) \neq \emptyset$ for all valuations $v \in \Omega^1_F$. 
\end{proposition}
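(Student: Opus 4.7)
The plan is to observe that one direction is trivial and the other reduces immediately to Lemma~\ref{refine}(\ref{lem:rank2 compl}). Since $\Omega^1_F \subseteq \Omega_F$, the forward implication is automatic: if $Z(F_v)\neq\emptyset$ for every $v\in\Omega_F$, then in particular this holds for every $v\in\Omega^1_F$.

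For the converse, I would use the structure theorem for $T$-valuations on a semi-global field recalled before Lemma~\ref{refine}: every $v\in\Omega_F$ has rank one or rank two, by Abhyankar's inequality. If $v$ has rank one, there is nothing to show since $v\in\Omega^1_F$. If $v$ has rank two, then I would write $v = v_2\circ v_1$ with $v_1\in\Omega^1_F$ and $v_2$ a rank one valuation on the residue field of $v_1$, and then invoke Lemma~\ref{refine}(\ref{lem:rank2 compl}) to conclude that $F_v = F_{v_1}$. Therefore any $F_{v_1}$-point of $Z$ given by the rank one hypothesis is automatically an $F_v$-point.

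There is essentially no obstacle: the proposition is a direct corollary of the classification of $T$-valuations into rank one and rank two, plus the identification of completions at composite valuations with completions at their rank one components. The substantive content has already been packaged into Lemma~\ref{refine}, so the proof should take only a few lines combining these two ingredients.
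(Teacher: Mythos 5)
Your proposal is correct and follows exactly the paper's own proof: the forward direction is immediate from $\Omega^1_F \subseteq \Omega_F$, and the converse reduces to the rank-two case via Lemma~\ref{refine}(\ref{lem:rank2 compl}), which identifies $F_v = F_{v_1}$.
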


\begin{proof}
The forward implication is immediate, so it suffices to prove the reverse implication.
As above, a valuation in $\Omega_F$ that is not of rank one is a rank two valuation
$v = v_2 \circ v_1,$ with each $v_i$ of rank one.
By Lemma \ref{refine}(\ref{lem:rank2 compl}), the completion $F_v$ at $v$ is equal to the completion $F_{v_1}$ at the rank one valuation $v_1$. Hence $Z(F_v) = Z(F_{v_1})$ is nonempty.
\end{proof}

\begin{proposition}\label{inc}
Let $\X$ be a regular model of $F$ with closed fiber $X$. Let $Z$ be an $F$-variety. If $Z(F_P) \neq \emptyset$ for all $P \in X$, then $Z(F_v) \neq \emptyset$ for all valuations $v \in \Omega_F$.
\end{proposition}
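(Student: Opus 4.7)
The plan is to deduce this directly from Proposition~\ref{fpfv}, which is the substantive content of the previous section. The whole point of that proposition is to provide, for each valuation $v \in \Omega_F$, a point $P \in X$ whose associated overfield $F_P$ sits inside $F_v$. Given such an inclusion, $F_v$-points are obtained from $F_P$-points by functoriality of $Z(-)$.

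More precisely, fix an arbitrary valuation $v \in \Omega_F$. By Proposition~\ref{fpfv} there is a point $P \in X$ (possibly non-closed) with $F_P \subseteq F_v$. This inclusion of fields induces a morphism $\Spec(F_v) \to \Spec(F_P)$ of $F$-schemes, and hence, by composition with any $F_P$-point of $Z$, a map of sets
\[
Z(F_P) \longrightarrow Z(F_v).
\]
By hypothesis $Z(F_P)$ is non-empty, so $Z(F_v)$ is non-empty as well. Since $v \in \Omega_F$ was arbitrary, the conclusion follows.

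There is essentially no obstacle: the heavy lifting was already done in Proposition~\ref{fpfv}, whose proof treats the rank one and rank two cases separately (using Lemma~\ref{lem:center} to obtain a center, Lemma~\ref{lem:injective} to pass to completions, and Lemma~\ref{refine}(\ref{lem:rank2 compl}) to reduce rank two valuations to rank one ones). Once those ingredients are in place, Proposition~\ref{inc} is a one-line corollary by functoriality of the functor of points. One could alternatively phrase the argument by combining Proposition~\ref{rk1=all} with Proposition~\ref{fpfv} applied only to rank one valuations, but this is not needed since Proposition~\ref{fpfv} already handles all $v \in \Omega_F$ uniformly.
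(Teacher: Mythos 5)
Your proof is correct and is essentially identical to the paper's: both invoke Proposition~\ref{fpfv} to find $P \in X$ with $F_P \subseteq F_v$ and then conclude by functoriality of the set of points. The extra explanatory text about $\Spec$ morphisms does not change the argument.
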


\begin{proof}
If $v$ is a valuation in $\Omega_F$, then $F_v$ contains a field $F_P$ for some $P \in X$ by Proposition~\ref{fpfv}.
Hence $Z(F_P) \subseteq Z(F_v)$, and so $Z(F_v) \neq \emptyset$. 
\end{proof}

The next proposition allows us to deduce the existence of $F_v^h$-points on a variety from $F_v$-points. This will be important in proving the main assertion in this section, Theorem~\ref{rk1inpt}.  

\begin{proposition}\label{AA}
Let $Z$ be a smooth $F$-variety.
\begin{enumerate}
\item \label{AAa}
Let $v \in \Omega^1_F$. 
If $Z(F_v) \neq \emptyset$, then $Z(F^h_v) \neq \emptyset$.
\item \label{AAb}
Let $\X$ be a regular model of $F$ and let $P$ be a closed point on its closed fiber. If $Z(F_P)~\neq~\emptyset$, then $Z(\mathrm{Frac}(\mathcal{O}_{\X,P}^h)) \neq \emptyset$.
\end{enumerate}
\end{proposition}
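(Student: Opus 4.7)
The plan is to treat the two parts separately. Both assert that a smooth $F$-variety with a point over a ``completion'' field also has a point over the corresponding ``henselian'' field, and in each case I would deduce this from a standard approximation principle, but the two ranks involved require different tools.

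For part~(\ref{AAa}), the key input is the implicit function theorem for smooth varieties over rank-one henselian valued fields: if $(L, w)$ is a rank-one henselian valued field with completion $\hat L$ and $Y$ is a smooth $L$-variety, then $Y(L)$ is dense in $Y(\hat L)$ in the $w$-adic topology. In the discrete case this is Greenberg's approximation theorem; the general rank-one case appears in work of Moret-Bailly and others. Applying it with $L = F_v^h$, whose canonical valuation is henselian of rank one and whose completion is $F_v$, and with $Y = Z_{F_v^h}$, the hypothesis $Z(F_v) \ne \emptyset$ combined with density forces $Z(F_v^h) \ne \emptyset$.

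For part~(\ref{AAb}), I would use Artin's approximation theorem. Set $A := \mathcal{O}_{\X,P}^h$, so that $\hat A = \hat{\mathcal{O}}_{\X,P}$, $L := \mathrm{Frac}(A)$, and $F_P = \mathrm{Frac}(\hat A)$, with $F \subset L \subset F_P$. The ring $A$ is excellent and henselian (since $T$ is excellent, $\mathcal{O}_{\X,P}$ is then excellent as a local ring on a finite-type $T$-scheme, and henselization preserves excellence), so Artin approximation holds over $A$ by Popescu's theorem on general N\'eron desingularization. Choose a smooth affine open $U = \mathrm{Spec}(F[x_1,\ldots,x_n]/(f_1,\ldots,f_m)) \subset Z$ containing the image of the given $F_P$-point $(a_1,\ldots,a_n)$. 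Writing $a_i = b_i/c$ with $b_i, c \in \hat A$ and $c \ne 0$, and homogenizing each $f_j$ to
\[
G_j(y_1,\ldots,y_n,z) := z^{\deg f_j} f_j(y_1/z,\ldots,y_n/z) \in F[y_1,\ldots,y_n,z],
\]
one has $G_j(b_1,\ldots,b_n,c)=0$. Scaling each $G_j$ by a suitable nonzero element of $A$ (possible since $F \subset \mathrm{Frac}(A)$) yields polynomials $\tilde G_j \in A[y_1,\ldots,y_n,z]$ still satisfying $\tilde G_j(b_1,\ldots,b_n,c)=0$.

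Now I would apply Artin approximation to the system $\tilde G_j = 0$ over $A$: for every $N \ge 0$ there exists $(b_1',\ldots,b_n',c') \in A^{n+1}$ with $\tilde G_j(b_1',\ldots,b_n',c')=0$ and $(b_1',\ldots,b_n',c') \equiv (b_1,\ldots,b_n,c) \pmod{\eum_A^N}$. Because $\hat A$ is Noetherian, hence $\eum$-adically separated, and $c \ne 0$, choosing $N$ larger than the $\eum$-adic order of $c$ forces $c' \ne 0$. Then $a_i' := b_i'/c' \in L$ satisfies $f_j(a_1',\ldots,a_n') = 0$, giving an $L$-point of $U \subseteq Z$. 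The principal obstacle is precisely this last arrangement: Artin approximation delivers $A$-points of the auxiliary homogenized scheme rather than $L$-points of $U$, so one must control the approximation carefully to keep the ``denominator coordinate'' $c'$ nonzero, after which dehomogenization yields the desired point in $L = \mathrm{Frac}(A)$.
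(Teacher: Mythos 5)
Your proposal is correct and uses the same key tools as the paper: for part~(\ref{AAa}) the density/implicit-function result for smooth varieties over henselian rank-one valued fields (the paper cites~\cite[Proposition~3.5.2]{GGMB}, which is exactly the Gabber--Gille--Moret-Bailly reference you have in mind); for part~(\ref{AAb}) Artin approximation over the henselian local ring $\mathcal{O}_{\X,P}^h$, valid because $T$ is an excellent DVR and $\mathcal{O}_{\X,P}$ is of essentially finite type over it.

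The one place where your implementation differs from the paper's is the reduction step in part~(\ref{AAb}). You handle the possibility that the $F_P$-point has coordinates outside $\wh{\mathcal{O}}_{\X,P}$ by homogenizing the defining equations, applying Artin approximation to the homogenized system over $A=\mathcal{O}_{\X,P}^h$, and then choosing the approximation order $N$ large enough to keep the denominator coordinate $c'$ nonzero before dehomogenizing. The paper instead invokes Lemma~\ref{lem:injective} to get $F_P = \wh{\mathcal{O}}_{\X,P}\otimes_{\mathcal{O}_{\X,P}} F$, and clears denominators by rescaling by a single element of $F^\times$ (an $F$-automorphism of affine space), so that the given point becomes an integral $\wh{\mathcal{O}}_{\X,P}$-point of an affine $B$-model $W$ to which Artin approximation applies directly. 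Both routes work; yours avoids the reliance on Lemma~\ref{lem:injective} at the cost of the extra homogenization bookkeeping, while the paper's is a bit slicker once that lemma is in hand.
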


\begin{proof}
Part~(\ref{AAa}) of the proposition follows from \cite[Proposition 3.5.2]{GGMB}, in the case of a smooth variety.  (Equivalently, it follows from the generalized Implicit Function Theorem (see \cite[Theorem 9.2]{greenpoproquette}), after choosing an \'etale morphism from an affine open subset to some $\mathbb{A}^d_F$, using smoothness.) 

For part~(\ref{AAb}),
recall that $F_P = \mathrm{Frac}(\wh{\mc O}_{\X,P})$, where $\wh{\mc O}_{\X,P}$ is the $\mathfrak{m}_P$-adic completion of $\mathcal{O}_{\X,P}$.
Let $U = \mathrm{Spec}(B)$ be an affine open neighborhood of $P$ in $\X$. 
Then $B$ is a $T$-algebra of finite type, with a maximal ideal $\mathfrak{m}'_P$ corresponding to $P$, and with fraction field $F$.
The henselization of the localization of $B$ at $\mathfrak{m}'_P$ is $\mathcal{O}_{\X,P}^h$.

After replacing $Z$ by an affine open subset that contains an $F_P$-point, we may assume that $Z$ is an affine $F$-variety, say $\mathrm{Spec}(F[t_1, t_2, \ldots, t_k]/(f_1, f_2, \ldots, f_n))$.  Since $F_P = \wh{\mc O}_{\X,P} \otimes_{{\mc O}_{\X,P}} F$ by Lemma~\ref{lem:injective}, after clearing denominators we may assume that the coefficients of the polynomials $f_i$ lie in $B$; that the $B$-variety $W := \mathrm{Spec}(B[t_1, t_2, \ldots, t_k]/(f_1, f_2, \ldots, f_n))$ has generic fiber $Z$; and that $W$ has an $\wh{\mc O}_{\X,P}$-point whose general fiber is the given $F_P$-point of $Z$.
By the existence of this $\wh{\mc O}_{\X,P}$-point, there is a solution in $\wh{\mc O}_{\X,P}$ to $f_1 = \ldots = f_n = 0$.
Since the complete discrete valuation ring $T$ is excellent, the Artin Approximation Theorem (see \cite[Theorem~1.10]{artin}) applies; and so there is also a solution in $\mathcal{O}_{\X,P}^h$ to $f_1 = \ldots = f_n = 0$. 
That is, $W$ has an $\mathcal{O}_{\X,P}^h$-point, so $Z$ has a $\mathrm{Frac}(\mathcal{O}_{\X,P}^h)$-point.
\end{proof} 

The next lemma is the key ingredient to the main theorem of this section.

\begin{lemma}\label{lem:chain}
Consider an infinite sequence of regular models of $F$  
\begin{equation*}
\X = \X_0 \leftarrow \X_1 \leftarrow \X_2 \leftarrow \dots
\end{equation*}
and non-empty finite sets $\mathcal{P}_i$ of closed points on the respective closed fibers $X_i$ of the models $\X_i$, where each $\X_i$ is obtained by blowing up $\X_{i-1}$ at the ideal defined by the set $\mathcal{P}_{i-1}$. 
Let $P_0, P_1, P_2, \ldots$ be an infinite sequence of points $P_i \in \mathcal{P}_i$ such that $P_{i+1}$ maps to $P_{i}$ for all $i \geq 0$, and consider the direct limit $\mathcal{O} := \varinjlim_i \mathcal{O}_{\X_i,P_i}$. 
Then $\mathcal{O}$ is a valuation ring on $F$, with valuation $v \in \Omega_F$ (i.e., $\mathcal{O} = \mathcal{O}_v$).
Moreover, 
 \begin{equation}\label{eq:limhens}
\mathcal{O}^h_v = \varinjlim_i \mathcal{O}^h_{\X_i, P_i}.
\end{equation}
In particular, $F^h_v = \varinjlim_{i} \mathrm{Frac}(\mathcal{O}^h_{\X_i, P_i}) \subseteq \varinjlim_{i} F_{P_i}$.
\end{lemma}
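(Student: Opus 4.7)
The plan is to establish, in order: (i)~that $\mathcal{O}$ is a $T$-valuation ring on $F$, so that $v \in \Omega_F$; (ii)~the henselization identity \eqref{eq:limhens}; and (iii)~the fraction-field inclusion, which will follow essentially for free.

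For the first and main step, I would begin by checking that each transition $\mathcal{O}_{\X_i,P_i} \to \mathcal{O}_{\X_{i+1},P_{i+1}}$ is a local injection of two-dimensional regular local domains which is a classical \emph{quadratic transform}. Indeed, since $P_i \in \mathcal{P}_i$, the blow-up $\X_{i+1} \to \X_i$ is, locally around $P_i$, the blow-up of the single closed point $P_i$; and the hypothesis that $P_{i+1}$ maps to $P_i$ places $P_{i+1}$ on the exceptional divisor above $P_i$, so $\mathcal{O}_{\X_{i+1},P_{i+1}}$ is the local ring at a closed point of the exceptional fiber. I would then invoke the classical theorem of Zariski (see, e.g., Abhyankar's paper \emph{On the valuations centered in a local domain}, or Zariski--Samuel, \emph{Commutative Algebra}, Vol.~II, App.~5) that an infinite chain of quadratic transforms of a two-dimensional regular local domain has filtered union equal to a valuation ring of its fraction field. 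Since $T \subset \mathcal{O}_{\X_0,P_0} \subset \mathcal{O}$ and any uniformizer of $T$ remains in every $\mathfrak{m}_{\X_i,P_i}$, hence in $\mathfrak{m}_v$, the resulting valuation $v$ is a non-trivial $T$-valuation and so belongs to $\Omega_F$.

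For the henselization identity, set $A := \varinjlim_i \mathcal{O}^h_{\X_i,P_i}$. A filtered colimit of henselian local rings along local transition maps is again a henselian local ring (to lift a simple root of a monic polynomial over $A$, descend to an index where all the coefficients lie and apply Hensel's lemma there), so $A$ is henselian local and contains $\mathcal{O}_v = \varinjlim_i \mathcal{O}_{\X_i,P_i}$ via a local inclusion. The universal property of henselization then yields a unique local $\mathcal{O}_v$-algebra map $\mathcal{O}^h_v \to A$. Conversely, each composite $\mathcal{O}_{\X_i,P_i} \hookrightarrow \mathcal{O}_v \hookrightarrow \mathcal{O}^h_v$ into the henselian ring $\mathcal{O}^h_v$ factors uniquely through $\mathcal{O}^h_{\X_i,P_i}$; the resulting compatible system induces $A \to \mathcal{O}^h_v$, and universality forces these two constructions to be mutually inverse.

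The third assertion is then immediate: fraction fields commute with filtered colimits of domains with injective transition maps, so $F^h_v = \mathrm{Frac}(\mathcal{O}^h_v) = \varinjlim_i \mathrm{Frac}(\mathcal{O}^h_{\X_i,P_i})$, and the standard inclusion $\mathcal{O}^h_{\X_i,P_i} \subset \wh{\mathcal{O}}_{\X_i,P_i}$ for Noetherian local rings gives the containment in $\varinjlim_i F_{P_i}$. The main obstacle I expect is Step~(i): it rests on Zariski's theorem, and the geometric work to be done is to verify that at each stage we really do have a quadratic transform rather than an arbitrary local map of regular local rings---this is clean here because $P_i \in \mathcal{P}_i$ and $P_{i+1}$ lies over $P_i$, but one has to say it.
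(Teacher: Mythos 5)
Your proof is correct, and the overall structure (show $\mathcal{O}$ is a valuation ring; show $\varinjlim \mathcal{O}^h_{\X_i,P_i}$ is a henselization of $\mathcal{O}_v$; pass to fraction fields) matches the paper's. The two places where you diverge are worth noting.

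For the key first step, the paper argues directly with the criterion ``for every $f \in F^\times$, either $f$ or $f^{-1}$ lies in some $\mathcal{O}_{\X_i,P_i}$,'' citing Lipman's result (Theorem~26.2 in \cite{lipman}) that an element of $F^\times$ cannot remain a point of indeterminacy along an infinite chain of such points. You instead observe that each $\mathcal{O}_{\X_i,P_i} \subset \mathcal{O}_{\X_{i+1},P_{i+1}}$ is a quadratic transform of two-dimensional regular local rings (after localizing the blow-up at $P_i$), and invoke Zariski's theorem that the filtered union of an infinite chain of quadratic transforms is a valuation ring. These are two packagings of the same classical surface-theoretic fact, and the reduction you make (that the blow-up along the reduced ideal of $\mathcal{P}_i$ is, locally at $P_i$, the blow-up of the point $P_i$, so that the chain really is a chain of quadratic transforms) is exactly what one needs; your version makes the link to the theory of infinitely near points explicit, while the paper's version is self-contained given Lipman's statement.

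For the henselization identity, the paper verifies the defining properties of the henselization directly (the limit is henselian, is a colimit of \'etale $\mathcal{O}_v$-algebras, has maximal ideal $\mathfrak{m}R$ and residue field $\mathcal{O}/\mathfrak{m}$), while you derive it from the universal property by producing maps in both directions and using uniqueness. Both are valid; your version is more categorical, the paper's more explicit, and they carry the same content. The final fraction-field step is the same in both. In short: correct, equivalent in substance, with a slightly different choice of classical reference in Step~(i) and a universal-property formulation in Step~(ii).
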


\begin{proof}
We first prove that $\mathcal{O}$ is a valuation ring of $F$; equivalently, for any $f \in F^{\times}$, either $f \in \mathcal{O}$ or $f^{-1} \in \mathcal{O}$.
For this, it suffices to show that for any fixed $f \in F^{\times}$, there exists an index~$i$ such that either $f \in \mathcal{O}_{\X_i,P_i}$ or $f^{-1} \in \mathcal{O}_{\X_i,P_i}$. 
This condition does in fact hold, by \cite[Theorem 26.2]{lipman}, which states that there does not exist an infinite sequence of points $P_i \in X_i \subseteq \X_i$ as in the hypothesis of this lemma, for which each $P_{i}$ is a point of indeterminacy for $f$.  So $\mathcal{O}$ is a valuation ring of $F$.  Its valuation $v$ lies in $\Omega_F$, because each local ring $\mathcal{O}_{\X,P_i}$ contains $T$ by definition, and hence so does $\mathcal{O}$.

To prove~\eqref{eq:limhens}, it suffices to show that
$R := \varinjlim_{i} \mathcal{O}^h_{\X_i, P_i}$
satisfies the properties that characterize the henselization of $\mathcal{O}$ with 
respect to $v$.  This ring is henselian because it is a direct limit of 
henselian rings.  For each $(\X_i,P_i)$, the ring $\mathcal{O}^h_{\X_i, P_i}$ is 
a direct limit of \'etale $\mathcal{O}_{\X_i, P_i}$-algebras $A_{\X_i,P_i,j}$, 
and so $R$ is the direct limit of the \'etale $\mathcal{O}_v$-algebras 
$\mathcal{O}_v A_{\X_i,P_i,j}$, as $i,j$ vary.  Since 
$\frak{m}\mathcal{O}^h_{\X_i, P_i}$ is the unique maximal ideal of the 
henselization $\mathcal{O}^h_{\X_i, P_i}$ of $\mathcal{O}_{\X_i, P_i}$ for all $i$, it 
follows that
$\frak{m}R$ is the unique maximal ideal of $R$.  Similarly, since
$\mathcal{O}^h_{\X_i, P_i}/\frak{m}\mathcal{O}^h_{\X_i, P_i} = 
\mathcal{O}/\frak{m}$ for all~$i$, it follows that $R/\frak{m}R  = 
\mathcal{O}/\frak{m}$.  So \eqref{eq:limhens} holds.
The last statement follows since construction of fraction fields commutes with direct limits and respects inclusions, and since $\mathcal{O}_{\X_i,P_i} \subseteq \mathcal{O}^h_{\X_i,P_i} \subseteq \wh{\mc O}_{\X_i,P_i}$ for all $i$. 
\end{proof}

As above, $F$ is a semi-global field over a complete discretely valued field $K$.

\begin{theorem}\label{rk1inpt}
Let $Z$ be a smooth $F$-variety that has an $F_v$-point for all $v \in \Omega^1_F$. Then there exists a regular model $\X$ of $F$ such that 
$Z(F_P)\neq \emptyset$ for all $P$ in the closed fiber $X$ of $\X$.
\end{theorem}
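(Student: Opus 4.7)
The plan is to argue by contradiction, eventually applying Lemma~\ref{lem:chain}. Assume that no regular model $\X$ of $F$ satisfies $Z(F_P)\neq\emptyset$ for every $P\in X$; call a point $P$ \emph{bad} if $Z(F_P)=\emptyset$. Any non-closed point of $X$ is the generic point $\eta$ of an irreducible component, and $F_\eta$ is then the completion of $F$ at the corresponding discrete valuation (which lies in $\Omega^{\mathrm{dvr}}_F\subseteq\Omega^1_F$); so the hypothesis forces every bad point to be a closed point of $\X$.

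The main obstacle, which I would address first, is to show that on any regular model $\X$ the set of bad closed points on $X$ is finite. For each generic point $\eta$ of a component of $X$ the hypothesis gives $Z(F_\eta)\neq\emptyset$, hence $Z(F_\eta^h)\neq\emptyset$ by Proposition~\ref{AA}(\ref{AAa}). Since $F_\eta^h=\mathrm{Frac}(\mathcal{O}^h_{\X,\eta})$ is a filtered colimit of fraction fields of \'etale local $\mathcal{O}_{\X,\eta}$-algebras, I plan to spread such an $F_\eta^h$-point out to a point of $Z$ over the function field of an \'etale neighborhood of $\eta$ in $\X$, and then to invoke an Artin approximation argument (in the spirit of the proof of Proposition~\ref{AA}(\ref{AAb})) to produce an $F_P$-point for every closed $P$ in a dense Zariski-open of the component of $X$ through $\eta$. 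Running this over the finitely many components of $X$ then leaves only finitely many closed points on $X$ that can possibly be bad.

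Granted that finiteness, the next step is to construct inductively a sequence $\X=\X_0\leftarrow\X_1\leftarrow\cdots$ by taking $\mathcal{P}_i$ to be the (finite, and nonempty by the standing contradictory hypothesis) set of all bad closed points of $\X_i$, and letting $\X_{i+1}$ be the blow-up of $\X_i$ at the ideal defined by $\mathcal{P}_i$. Since $\mathcal{P}_i$ consists of regular closed points on a regular arithmetic surface, each $\X_{i+1}$ is again a regular model. Because the blow-up is an isomorphism away from $\mathcal{P}_i$, a closed point of $\X_{i+1}$ lying over $\X_i\setminus\mathcal{P}_i$ has the same completed local ring and hence the same overfield $F_Q$; so every bad closed point of $\X_{i+1}$ maps to a (necessarily bad) point of $\mathcal{P}_i$. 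K\"onig's lemma applied to this finitely-branching inverse system of nonempty finite sets then yields an infinite chain $P_i\in\mathcal{P}_i$ with $P_{i+1}\mapsto P_i$.

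The final step is to feed this chain into Lemma~\ref{lem:chain}, which produces a valuation $v\in\Omega_F$ with $F_v^h\subseteq\varinjlim_i F_{P_i}$. The standing hypothesis together with Proposition~\ref{rk1=all} gives $Z(F_v)\neq\emptyset$. If $v$ has rank one, Proposition~\ref{AA}(\ref{AAa}) gives $Z(F_v^h)\neq\emptyset$ directly; if $v=v_2\circ v_1$ has rank two, Lemma~\ref{refine}(\ref{lem:rank2 compl}) combined with Proposition~\ref{AA}(\ref{AAa}) applied to $v_1$ gives $Z(F^h_{v_1})\neq\emptyset$, and Lemma~\ref{refine}(\ref{lem:rank2 hens}) then upgrades this to $Z(F_v^h)\neq\emptyset$. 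Since $Z$ is of finite type over $F$, any morphism $\Spec F_v^h\to Z$ factors through $\Spec F_{P_j}$ for some $j$, yielding $Z(F_{P_j})\neq\emptyset$ and contradicting the badness of $P_j$.
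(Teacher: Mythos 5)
Your proof follows essentially the same route as the paper's: establish finiteness of the bad closed points on any regular model, build the chain of blow-ups centered on those points, extract an infinite compatible sequence $(P_i)$ by a K\"onig-lemma argument, feed it into Lemma~\ref{lem:chain} to obtain $v\in\Omega_F$ with $F_v^h\subseteq\varinjlim_i F_{P_i}$, and use Proposition~\ref{AA}(\ref{AAa}) together with Lemma~\ref{refine} to show $Z(F_v^h)\neq\emptyset$, contradicting $\bigcup_i Z(F_{P_i})=\emptyset$. The only real variation is in the finiteness step: the paper cites \cite[Proposition~5.8]{hhktorsors} to produce an affine open $U\subseteq Y_0$ with $Z(F_U)\neq\emptyset$ and uses $F_U\subseteq F_P$; you start instead from $Z(F^h_\eta)\neq\emptyset$ and spread out, which is precisely the alternative the paper points to parenthetically.

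Two small corrections to your sketch. First, Artin approximation is not the right tool for your finiteness step: it passes from $\wh{\mathcal O}$-points to $\mathcal O^h$-points, whereas you need to go from a point over $F_\eta^h=\mathrm{Frac}(\mathcal O^h_{\X,\eta})$ to points over $F_P=\mathrm{Frac}(\wh{\mathcal O}_{\X,P})$ for almost all closed $P\in Y_0$. The correct mechanism is to factor the $F_\eta^h$-point through the fraction field of some \'etale local $\mathcal O_{\X,\eta}$-algebra with trivial residue extension, spread that algebra out to an \'etale $U$-scheme $V$ (for some open $U\ni\eta$) carrying a point of a $U$-model of $Z$, extend the tautological section over $\eta$ to a dense open of $Y_0$, and for each closed $P$ there lift the fiber section to a $\mathcal O^h_{\X,P}$-point of $V$ by henselianity of $\mathcal O^h_{\X,P}$; then $\mathcal O^h_{\X,P}\subseteq\wh{\mathcal O}_{\X,P}$ gives $Z(F_P)\neq\emptyset$, with no approximation theorem needed. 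Second, the final sentence is misphrased: a morphism $\Spec F_v^h\to Z$ does not "factor through $\Spec F_{P_j}$" (that would require $F_{P_j}\subseteq F_v^h$); rather, base-changing the $F_v^h$-point to $\bigcup_i F_{P_i}\supseteq F_v^h$ and using that $Z$ is of finite type shows the point descends to some $F_{P_j}$, i.e.\ $Z\left(\bigcup_i F_{P_i}\right)=\bigcup_i Z(F_{P_i})$, which is exactly how the paper phrases the contradiction.
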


\begin{proof}
Fix a regular model $\X_0$ of $F$, and let $X_0$ denote its closed fiber. 
Let $Y_0$ be an irreducible component of $X_0$. Then the generic point $\eta$ of $Y_0$ corresponds to a valuation $v$ of rank one (centered on $\eta$). So by assumption, $Z(F_v) = Z(F_{\eta})$ is nonempty. Let $t$ be a uniformizer of $K$. For a nonempty affine open subset $U \subseteq Y_0$ that does not meet any other irreducible component of $X_0$, let $F_U$ denote the fraction field of the $t$-adic completion of the ring of rational functions on $\X_0$ that are regular along $U$ (see \cite[Notation~3.3]{hhk1}). By \cite[Proposition 5.8]{hhktorsors}, there exists such an open subset $U$ for which $Z(F_U) \neq \emptyset$.  (This could also be deduced from Proposition~\ref{AA}(\ref{AAa}) above.)
If $P\in U$, then $F_U\subseteq F_P$ (loc.\ cit.), 
and thus $Z(F_P)\neq \emptyset$ for such $P$.  Since $Y_0\setminus U$ is finite, there are at most finitely many points $P\in Y_0$ for which the set $Z(F_P)$ is empty. 
Ranging over the finitely many components of $X_0$, we obtain a finite (possibly empty) set $\mathcal{P}_0 \subseteq X_0$ consisting of exactly those points $P\in X_0$ for which $Z(F_{P}) = \emptyset$.

Let $\X_1$ be the blowup of $\X_0$ at all points of $\mathcal{P}_0$ and let $X_1$ be its closed fiber.
By the same argument as above, there exists a finite (possibly empty) set $\mathcal{P}_1 \subseteq X_1$ such that $Z(F_P) = \emptyset$ exactly for $P \in \mathcal{P}_1$. Let $\X_2$ be the blowup of $\X_1$ at all points of $\mathcal{P}_1$, etc. 
This process yields a chain of models $\X_0 \leftarrow \X_1 \leftarrow \X_2 \leftarrow \ldots$ and corresponding sets $\mathcal{P}_0, \mathcal{P}_1, \mathcal{P}_2, \ldots$ of closed points on their respective closed fibers. 

\smallskip

We claim that the disjoint union $\mathcal{P}:= \bigcup_{i \geq 0} \mathcal{P}_i$ is finite. This claim immediately implies that the chain of models $\X_0 \leftarrow \X_1 \leftarrow \X_2 \leftarrow \ldots$ terminates in some model $\X_M=:\X$ for which $\mathcal{P}_M = \emptyset$. That is, the model $\X$ satisfies $Z(F_P) \neq \emptyset$ for all points $P$ in its closed fiber $X$, as required.

\smallskip

It remains to prove the claim. Suppose to the contrary that $\mathcal{P}$ is infinite. For $j\geq i$, we say that a point $Q$ on $X_j$ {\em lies over} a point $P$ on $X_i$ if $Q$ maps to $P$ under the sequence of blowups $\X_i\leftarrow \cdots \leftarrow \X_j$. Note that each point of $\mathcal{P}_1$ lies over some point of $\mathcal{P}_0$, and similarly each point of $\mathcal{P}_{i+1}$ lies over some point of $\mathcal{P}_i$, for all $i\geq 1$.
Since $\mathcal{P}$ is infinite and ${\mathcal P}_0$ is finite, there exists a point $P_0\in \mathcal{P}_0$ such that infinitely many points in $\mathcal{P}$ lie over $P_0$. Inductively, if $P_i \in {\mathcal P}_i$ is a point such that infinitely many points in $\mathcal{P}$ lie over $P_i$, there is a point $P_{i+1}\in {\mathcal P}_{i+1}$ lying over $P_i$, such that infinitely many points in $\mathcal{P}$ lie over $P_{i+1}$. This defines an infinite sequence of points $P_1, P_2, \ldots$, since $\mathcal{P}$ is infinite.  Here $\mathcal{O}_{\X_i, P_i} \subseteq \mathcal{O}_{\X_{i+1}, P_{i+1}}$, 
$\wh{\mathcal{O}}_{\X_i, P_i} \subseteq \wh{\mathcal{O}}_{\X_{i+1}, P_{i+1}}$, and
$F_{P_i} \subseteq F_{P_{i+1}}$. 

Consider the direct limit $\mathcal{O} := \varinjlim_{i} \mathcal{O}_{\X_i, P_i}$. By Lemma \ref{lem:chain}, the ring $\mathcal{O}$ is a valuation ring with respect to some valuation $v$ on $F$. If $v$ has rank one, then $Z(F_v) \neq \emptyset$ by assumption, and consequently $Z(F_v^h)\neq \emptyset$ by Proposition~\ref{AA}(\ref{AAa}). If $v$ has rank two, then Lemma~\ref{refine} implies that there exists a rank one valuation $v_1$ of $v$ for which $F_{v} = F_{v_1}$  and $F_{v_1}^h \subseteq F_v^h$, so again $Z(F_v)=Z(F_{v_1})\neq \emptyset$, and hence $\emptyset\neq Z(F_{v_1}^h)\subseteq Z(F_v^h)$ by Proposition~\ref{AA}(\ref{AAa}).  
By Lemma~\ref{lem:chain}, $F_v^h\subseteq \varinjlim_i F_{P_i}$, and thus 
\[
\emptyset\neq Z(F_v^h) \subseteq Z (\varinjlim_i F_{P_i} ) = Z(\cup_i F_{P_i})= \cup_i Z(F_{P_i}) = \emptyset,
\]
where the equalities hold because $F_{P_i}\subseteq F_{P_{i+1}}$. This contradiction proves the claim.
\end{proof}

\begin{remark}
\begin{enumerate}
\item The model given by Theorem~\ref{rk1inpt} also satisfies the a priori stronger assertion that $Z(\mathrm{Frac}(\mathcal{O}_{\X,P}^h)) \neq \emptyset$ for all points $P$ in the closed fiber $X$ of $\X$, by Proposition~\ref{AA}(\ref{AAb}).
\item In Theorem~\ref{rk1inpt}, instead of assuming that $Z$ is smooth, we could assume the existence of a smooth $F_v$-point for every~$v$, since we could apply the theorem to the smooth locus of~$Z$.
\end{enumerate}
\end{remark}

\section{Local-global principles for rational points}

In this section, we use the results of Section~\ref{sec:local} to deduce results on local-global principles. Subsection~\ref{ssec:LGPvar} treats local-global principles for varieties, while Subsection~\ref{ssec:lgptors} concerns local-global principles for torsors.

As before, $F$ is a semi-global field over a complete discretely valued field $K$ with ring of integers $T$. 
Let $\Omega_F$ denote the set of valuations on $F$ whose valuation ring contains $T$, write $\Omega^1_F$ for the subset of valuations of rank one whose valuation ring contains $T$, and write $\Omega_F^{\mathrm{dvr}}$ for the subset of discrete valuations. For a regular model $\X$ of $F$ with closed fiber $X$, let $\Omega_{\X}$ denote the set of points $P$ of $X$.

\subsection{Local-global principles for $F$-varieties}\label{ssec:LGPvar}

The results in Section~\ref{sec:local} immediately give the following theorem:

\begin{theorem}\label{B}
Let $Z$ be a smooth variety over a semi-global field $F$.  
Then the following are equivalent:
\begin{enumerate}
\item\label{Ba} There is a regular model $\X$ of $F$ such that $Z(F_P) \neq \emptyset$ for all points $P$ in its closed fiber~$X$.
\item\label{Bb} $Z(F_v) \neq \emptyset$ for all valuations $v \in \Omega^1_F$.
\item\label{Bc} $Z(F_v) \neq \emptyset$ for all valuations $v \in \Omega_F$.
\end{enumerate}
\end{theorem}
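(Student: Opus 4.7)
The plan is to prove this theorem by assembling the implications into a cycle (\ref{Ba})~$\Rightarrow$~(\ref{Bc})~$\Rightarrow$~(\ref{Bb})~$\Rightarrow$~(\ref{Ba}), using the results already developed in Section~\ref{sec:local}. Each implication reduces to a statement already established, so the proof should be short.

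For (\ref{Ba})~$\Rightarrow$~(\ref{Bc}), I would simply invoke Proposition~\ref{inc}: if there is a regular model $\X$ with $Z(F_P)\neq\emptyset$ for all $P \in X$, then for every $v \in \Omega_F$ the completion $F_v$ contains some $F_P$ by Proposition~\ref{fpfv}, hence $Z(F_v) \supseteq Z(F_P) \neq \emptyset$. The implication (\ref{Bc})~$\Rightarrow$~(\ref{Bb}) is trivial, since $\Omega^1_F \subseteq \Omega_F$.

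The main content is the implication (\ref{Bb})~$\Rightarrow$~(\ref{Ba}), which is precisely Theorem~\ref{rk1inpt}. So for that direction I would simply cite Theorem~\ref{rk1inpt}: starting from a variety $Z$ that is smooth and has $Z(F_v)\neq\emptyset$ for all rank one valuations $v \in \Omega^1_F$, that theorem produces a regular model $\X$ whose closed fiber $X$ has $Z(F_P)\neq\emptyset$ at every point $P$.

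The main obstacle, such as it is, has already been faced in the proof of Theorem~\ref{rk1inpt}: iteratively blowing up the finitely many bad points at each stage, and ruling out an infinite tower of bad points by appealing to Lemma~\ref{lem:chain} (to identify such a tower with a valuation $v \in \Omega_F$) together with Lemma~\ref{refine} and Proposition~\ref{AA} (to pass from $F_v$-points to $F_v^h$-points, and then to $F_{P_i}$-points along the limit). With those tools in hand, the present theorem is a straightforward repackaging, and no additional argument is needed beyond the citations above.
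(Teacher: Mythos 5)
Your proof is correct and follows exactly the same cycle of implications as the paper's own proof: (\ref{Ba})~$\Rightarrow$~(\ref{Bc}) by Proposition~\ref{inc}, (\ref{Bc})~$\Rightarrow$~(\ref{Bb}) trivially, and (\ref{Bb})~$\Rightarrow$~(\ref{Ba}) by Theorem~\ref{rk1inpt}. Nothing to add.
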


\begin{proof}
The assertion~(\ref{Ba}) implies~(\ref{Bc}) by Proposition~\ref{inc}. Trivially~(\ref{Bc}) implies~(\ref{Bb}). The fact that~(\ref{Bb}) implies~(\ref{Ba}) holds by Theorem~\ref{rk1inpt}. 
\end{proof}

We say that a class $\cC$ of $F$-varieties \emph{satisfies a local-global principle} with respect to a set $\{ F_i \}_i$ of overfields of $F$ if every $Z$ in $\cC$ has the property that if
$Z(F_i) \neq \emptyset$ for all $F_i$  then $Z(F) \neq \emptyset$.  In this language, Theorem~\ref{B} immediately yields the following corollary.

\begin{corollary} \label{cor:lgp equiv}
Let $\cC$ be a class of smooth varieties over a semi-global field $F$.  Then 
$\cC$ satisfies a local-global principle with respect to $\{F_v \mid v \in \Omega_F\}$ (or equivalently, with respect to $\{F_v \mid v \in \Omega^1_F\}$)
if and only if 
$\cC$ satisfies a local-global principle with respect to $\{F_P \mid P \in X\}$ for every regular model $\X$ of $F$ with closed fiber $X$.
\end{corollary}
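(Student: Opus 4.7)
The plan is to deduce the corollary essentially as a direct translation of Theorem~\ref{B} from the language of individual varieties to the language of local-global principles, handling each implication separately.

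For the forward direction, I will assume that $\cC$ satisfies a local-global principle with respect to $\{F_v \mid v \in \Omega_F\}$. Fix any regular model $\X$ with closed fiber $X$, and take $Z \in \cC$ with $Z(F_P) \neq \emptyset$ for every $P \in X$. The implication (\ref{Ba})$\Rightarrow$(\ref{Bc}) of Theorem~\ref{B} (which ultimately rests on Proposition~\ref{inc}) gives $Z(F_v) \neq \emptyset$ for every $v \in \Omega_F$, and the assumed local-global principle then yields $Z(F) \neq \emptyset$. This establishes the local-global principle with respect to $\{F_P \mid P \in X\}$ for the chosen model, and since $\X$ was arbitrary, for every regular model.

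For the reverse direction, I will suppose that $\cC$ satisfies a local-global principle with respect to $\{F_P \mid P \in X\}$ for every regular model $\X$. Given $Z \in \cC$ with $Z(F_v) \neq \emptyset$ for all $v \in \Omega^1_F$ (equivalently, for all $v \in \Omega_F$, by Proposition~\ref{rk1=all}), the implication (\ref{Bb})$\Rightarrow$(\ref{Ba}) of Theorem~\ref{B}, which is the content of Theorem~\ref{rk1inpt}, produces \emph{some} specific regular model $\X$ of $F$ such that $Z(F_P) \neq \emptyset$ for all $P$ in its closed fiber $X$. Applying the hypothesis to this particular model gives $Z(F) \neq \emptyset$, as required. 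The parenthetical equivalence in the statement of the corollary follows immediately from Proposition~\ref{rk1=all}.

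There is essentially no obstacle here, since the corollary is a direct repackaging of the equivalences already established in Theorem~\ref{B}. The only point deserving a word of care is that in the reverse direction, the regular model $\X$ produced by Theorem~\ref{rk1inpt} depends on $Z$, so we must quantify the hypothesis over \emph{all} regular models rather than over a single fixed one — which is precisely what the statement does. No separate argument is needed beyond citing the relevant parts of Theorem~\ref{B} and Proposition~\ref{rk1=all}.
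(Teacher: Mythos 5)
Your proof is correct and follows exactly the route the paper intends: the corollary is stated as an immediate consequence of Theorem~\ref{B}, and you simply unpack the two directions, using (\ref{Ba})$\Rightarrow$(\ref{Bc}) for the forward implication and (\ref{Bb})$\Rightarrow$(\ref{Ba}) for the reverse, together with Proposition~\ref{rk1=all} for the parenthetical equivalence. The remark about the model depending on $Z$ is exactly the right point of care and matches the paper's quantification over all regular models.
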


\begin{example} Let $G$ be a rational connected linear algebraic group over the semi-global field $F$, and let $\cC$ be the class of transitive homogeneous $G$-spaces $Z$ over $F$ (i.e., $G(E)$ acts transitively on $Z(E)$ for every overfield $E$ of $F$).  Then for every regular model $\X$ of $F$, say with closed fiber $X$, the class $\cC$ satisfies a local-global principle with respect to $\{F_P \mid P \in X\}$ (see \cite[Theorem~9.1]{hhktorsors}).  
As a consequence of Corollary~\ref{cor:lgp equiv}, it then follows that the class $\cC$ also satisfies a local-global principle with respect to the set $\{F_v \mid v \in \Omega^1_F\}$.
\end{example}

\subsection{Local-global principles for torsors}\label{ssec:lgptors}

Let $G$ be a linear algebraic group (i.e., a smooth affine group scheme of finite type) over~$F$. 
We now define and compare several obstruction sets to the existence of global $F$-points on torsors under $G$. 

Recall that each $G$-torsor over $F$ is represented by a class in the pointed set $H^1(F,G)$, and that this class is trivial if and only if the torsor has an $F$-point.  As discussed in the introduction, we have 
the following obstruction sets to local-global principles:
\[
\Sha_{\Omega_F}(F,G) = \ker \bigl( H^1(F,G) \to \prod_{v \in \Omega_F} H^1(F_v,G) \bigl),
\]
\[
\Sha_{\Omega^1_F}(F,G) = \ker \bigl( H^1(F,G) \to \prod_{v \in \Omega^1_F} H^1(F_v,G) \bigl),
\] 
\[
\Sha_{\mathrm{dvr}}(F,G) = \ker \bigl( H^1(F,G) \to \prod_{v \in \Omega^{\mathrm{dvr}}_F } H^1(F_v,G) \bigl).
\]
Here the {\em kernel} of a map of pointed sets is by definition the preimage of the trivial element.

Since $\Omega^{\mathrm{dvr}}_F \subseteq \Omega^1_F \subseteq \Omega_F$, the above obstruction sets are related by containments as well:
\[\Sha_{\Omega_F}(F,G) \subseteq \Sha_{\Omega^1_F}(F,G) \subseteq \Sha_{\mathrm{dvr}}(F,G).\] 

Finally, for any regular model ${\mathcal X}$ of $F$ with closed fiber~$X$, we let
\[
\Sha_X(F,G) = \ker \bigl( H^1(F,G) \to \prod_{P \in X} H^1(F_P,G) \bigl).
\]
By the previous subsection, we obtain:
\begin{theorem}\label{A}
Let $G$ be a linear algebraic group over a semi-global field $F$.  Then 
\[
\Sha_X(F,G) \subseteq \Sha_{\Omega_F}(F,G)\]
for any regular model ${\mc X}$ of $F$.  
Moreover, taking the direct limit over all such models $\mc X$, we have
\[
\varinjlim_{\X} \Sha_X(F,G) = \Sha_{\Omega_F}(F,G) = \Sha_{\Omega^1_F}(F,G).
\]
\end{theorem}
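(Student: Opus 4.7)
The plan is to deduce Theorem~\ref{A} essentially as a formal consequence of the results of Section~\ref{sec:local}. The standard dictionary between torsors and varieties is used: a class $[Z] \in H^1(F,G)$ is represented by a $G$-torsor $Z$ over $F$, which is a smooth $F$-variety because $G$ is a linear algebraic group, and $[Z]$ is trivial over an extension $E/F$ exactly when $Z(E) \neq \emptyset$. Under this correspondence, membership of $[Z]$ in $\Sha_X(F,G)$, $\Sha_{\Omega_F}(F,G)$, or $\Sha_{\Omega_F^1}(F,G)$ encodes exactly the existence of rational points of $Z$ over the fields in the corresponding family. The inclusion $\Sha_X(F,G) \subseteq \Sha_{\Omega_F}(F,G)$ for a fixed regular model is then immediate from Proposition~\ref{inc}, and the equality $\Sha_{\Omega_F}(F,G) = \Sha_{\Omega_F^1}(F,G)$ follows from Proposition~\ref{rk1=all}.

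For the direct-limit statement I first organize the regular projective models of $F$ into a directed system under dominance; this is directed because any two regular models are dominated by a common regular model, obtained by taking the graph of the birational transition map and applying resolution of singularities for arithmetic surfaces. If $\mathcal{X}' \to \mathcal{X}$ is such a dominance and $P \in X'$ maps to $Q \in X$, then the inclusion $\mathcal{O}_{\mathcal{X},Q} \hookrightarrow \mathcal{O}_{\mathcal{X}',P}$ is continuous for the maximal-ideal-adic topologies (since $\mathfrak{m}_Q$ maps into $\mathfrak{m}_P$), and hence induces a map of completions $\widehat{\mathcal{O}}_{\mathcal{X},Q} \to \widehat{\mathcal{O}}_{\mathcal{X}',P}$. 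This map is injective by Lemma~\ref{lem:injective}: its kernel meets $\mathcal{O}_{\mathcal{X},Q}$ trivially because the composite $\mathcal{O}_{\mathcal{X},Q} \hookrightarrow \mathcal{O}_{\mathcal{X}',P} \hookrightarrow \widehat{\mathcal{O}}_{\mathcal{X}',P}$ is injective. Passing to fraction fields gives $F_Q \subseteq F_P$, so $\Sha_X(F,G) \subseteq \Sha_{X'}(F,G)$ under dominance. Consequently the direct limit in pointed sets is the union $\bigcup_{\mathcal{X}} \Sha_X(F,G)$ inside $H^1(F,G)$.

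It remains to show both inclusions in $\varinjlim_{\mathcal{X}} \Sha_X(F,G) = \Sha_{\Omega_F}(F,G)$. One direction follows at once from the first containment applied model by model. For the reverse, given $[Z] \in \Sha_{\Omega_F^1}(F,G)$, the associated torsor $Z$ is smooth and has an $F_v$-point for every $v \in \Omega_F^1$, so Theorem~\ref{rk1inpt} produces a regular model $\mathcal{X}$ for which $Z(F_P) \neq \emptyset$ for all $P \in X$. Thus $[Z] \in \Sha_X(F,G) \subseteq \varinjlim_{\mathcal{X}} \Sha_X(F,G)$, and combining with $\Sha_{\Omega_F} = \Sha_{\Omega_F^1}$ closes the triangle.

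The substantive work has already been carried out in Theorem~\ref{rk1inpt} and its supporting Lemma~\ref{lem:chain}; Theorem~\ref{A} itself is a packaging exercise. The only mildly technical point is the setup of the direct system of regular models and, in particular, the verification that dominance induces inclusions $F_Q \subseteq F_P$ on the relevant completions. This is what I expect to be the main (but minor) obstacle, and it reduces cleanly to Lemma~\ref{lem:injective}.
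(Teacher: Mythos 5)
Your proposal is correct and follows essentially the same route as the paper: deduce the inclusion from Proposition~\ref{inc}, the equality $\Sha_{\Omega_F}=\Sha_{\Omega_F^1}$ from Proposition~\ref{rk1=all}, and the direct-limit equality from Theorem~\ref{rk1inpt} (equivalently Theorem~\ref{B}), via the smoothness of torsors under a linear algebraic group. The paper's proof is terser and leaves the directedness of the system of models and the inclusions $F_Q \subseteq F_P$ under dominance implicit, whereas you spell these out; that extra care is sound but does not change the argument.
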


\begin{proof}
If $Z$ is a $G$-torsor over $F$, then $Z$ is smooth (because $G$ is). Proposition~\ref{inc} shows that $\Sha_X(F,G) \subseteq \Sha_{\Omega_F}(F,G)$ for any regular model $\X$ of $F$, 
and so $\varinjlim_{\X} \Sha_X(F,G) \subseteq \Sha_{\Omega^1_F}(F,G)$.  The asserted equalities follow by applying Theorem~\ref{B} to $Z$.  
\end{proof}

It is an interesting open problem to understand the relationship between $\Sha_{\Omega^1_F}(F,G)$ and $\Sha_{\mathrm{dvr}}(F,G)$; in particular, whether the inclusion is an equality.

\smallskip

{\small \noindent Author information:

\smallskip

\noindent David Harbater: Department of Mathematics, University of Pennsylvania, Philadelphia, PA 19104-6395, USA. E-mail: {\tt harbater@math.upenn.edu}

\smallskip

\noindent Julia Hartmann:  Department of Mathematics, University of Pennsylvania, Philadelphia, PA 19104-6395, USA. E-mail: {\tt hartmann@math.upenn.edu}

\smallskip

\noindent Valentijn Karemaker:  Mathematical Institute, Utrecht University, P.O. Box
80010, 3508 TA Utrecht, the Netherlands and Department of Mathematics,
Stockholm University, SE 10691 Stockholm, Sweden.  E-mail: {\tt V.Z.Karemaker@uu.nl}

\smallskip

\noindent Florian Pop: Department of Mathematics, University of Pennsylvania, Philadelphia, PA 19104-6395, USA. E-mail: {\tt pop@math.upenn.edu}}

\end{document}